\documentclass{amsart}

\usepackage{amsmath,amssymb,amsthm,mathtools}
\usepackage[initials]{amsrefs}
\usepackage{enumitem,microtype}
\usepackage[dvipsnames]{xcolor}
\usepackage[colorlinks=true,linkcolor=blue!55!black,citecolor=green!45!black,urlcolor=red!60!black]{hyperref}

\numberwithin{equation}{section}
\usepackage{fourier}
\usepackage[margin=1.5in]{geometry}

\newtheorem{theorem}{Theorem}[section]
\newtheorem{lemma}[theorem]{Lemma}
\newtheorem{proposition}[theorem]{Proposition}
\newtheorem{corollary}[theorem]{Corollary}
\newcommand{\Ent}{\operatorname{Ent}}
\newcommand{\KL}{\operatorname{KL}}
\newcommand{\E}{\mathbb E}
\newcommand{\dd}{\,\mathrm d}
\newcommand{\tr}{\operatorname{tr}}
\newcommand{\Id}{I_d}

\newcommand{\COT}{\mathcal C_{\rm OT}}

\title[Entropy hypocoercivity and query complexity for BPS]
      {Entropy hypocoercivity and query complexity\\
       for the bouncy particle sampler}
\author{Jianfeng Lu}
\address{Department of Mathematics, Department of Physics, and Department of Chemistry, Duke University}
\email{jianfeng@math.duke.edu}
\author{Yinchen Luo}
\address{Department of Mathematics, Duke University}
\email{yinchen.luo@duke.edu}
\date{}
\subjclass[2020]{Primary 65C05; Secondary 60J25, 35Q84, 49Q22}
\keywords{Bouncy particle sampler, hypocoercivity, relative entropy, optimal transport, query complexity}
\hypersetup{
  pdftitle={Entropy hypocoercivity and query complexity for the bouncy particle sampler},
  pdfauthor={Jianfeng Lu and Yinchen Luo}
}

\begin{document}

\begin{abstract}
We study convergence in relative entropy of the bouncy particle sampler with Gaussian velocity refreshment, for a target \(e^{-U}\) on \(\mathbb R^d\) with \(U\) \(m\)-strongly convex and \(L\)-smooth, and condition number \(\kappa=L/m\). For initial laws with bounded density relative to equilibrium, we show that the relative entropy decays exponentially up to an arbitrary additive error \(\delta\), with a rate that depends only logarithmically on the density bound and on \(\delta\). From a Gaussian cold start and with a suitably chosen refreshment rate, the relative entropy falls below \(\delta\) after time \(\widetilde O(\sqrt{d/m})\). Combined with exact simulation by windowed thinning, this gives an expected gradient-query complexity of \(\widetilde O(d\sqrt\kappa)\), improving the \(\widetilde O(d^2\sqrt\kappa)\) bound based on \(L^2\) hypocoercivity. 
\end{abstract}

\maketitle

\section{Introduction and main results}\label{sec:setting}
We prove quantitative bounds on relative entropy decay and gradient-query complexity for the bouncy particle sampler (BPS) with Gaussian velocity refreshment. We consider an \(m\)-strongly convex, \(L\)-smooth potential \(U:\mathbb R^d\to\mathbb R\) and write \(\kappa=L/m\) for its condition number. For initial densities bounded relative to equilibrium, we obtain an entropy estimate consisting of an exponentially decaying term and an explicit additive remainder. The estimate holds for every constant refreshment rate. The remainder can be made arbitrarily small at the cost of a slower decay rate.

From a Gaussian cold start, we choose a constant refreshment rate for each prescribed accuracy and bound the deterministic time needed to reach that accuracy in relative entropy. Combining this convergence bound with exact simulation by windowed thinning gives an expected gradient-query complexity of \(\widetilde O(d\sqrt\kappa)\), with logarithmic factors in \(d\), \(\kappa\), and inverse accuracy suppressed.

Throughout, we assume \(d\ge1\) and \(U\in C^2(\mathbb R^d)\), with
\[
 m\Id\preceq\nabla^2U\preceq L\Id,
 \qquad 0<m\le L<\infty.
\]
We place the minimizer at the origin and normalize \(U(0)=0\).

Write \(Z=\int_{\mathbb R^d}e^{-U(x)}\dd x\) for the normalizing constant and \(\varphi_d=N(0,\Id)\) for the Gaussian velocity law. The spatial target and joint equilibrium measure are
\[
 \pi_x(\dd x)=Z^{-1}e^{-U(x)}\dd x,
 \qquad \pi=\pi_x\otimes\varphi_d.
\]
Let \(\Pi_v f(x)=\int f(x,v)\dd\varphi_d(v)\) denote velocity averaging. For a constant refreshment rate \(\gamma>0\), BPS has generator
\begin{equation*}
 \begin{aligned}
 \mathcal L f&=v\cdot\nabla_x f+(v\cdot\nabla U)_+
       [f(x,R_xv)-f(x,v)]+\gamma(\Pi_v f-f),\\
 R_xv&=v-2(v\cdot n)n,\qquad n=\nabla U/|\nabla U|.
 \end{aligned}
\end{equation*}
At the minimizer, choose any unit vector for \(n\); the bounce rate is zero there. Each refreshment event replaces the velocity by an independent draw from \(\varphi_d\). The measure \(\pi\) is invariant for this dynamics.

Let \(\mathcal P_t\) denote the BPS transition semigroup. We write \(\mu_t=\mu_0\mathcal P_t\) for the law at time \(t\) and \(g_t=\dd\mu_t/\dd\pi\) for its density relative to \(\pi\). For a probability density \(g\) with respect to \(\pi\), the relative entropy is
\[
 \Ent_\pi(g)=\int g\log g\dd\pi=\KL(g\pi\|\pi).
\]

\begin{samepage}
\begin{theorem}[Entropy decay with an additive remainder]\label{thm:master}
Suppose \(0\le g_0\le M\) and \(\int g_0\dd\pi=1\).
There is a universal constant \(c>0\) such that, for every constant refreshment rate \(\gamma>0\) and every \(0<\delta\le1/2\), the BPS trajectory satisfies
\begin{equation}\label{eq:master}
 \Ent_\pi(g_t)
 \le 2\exp\!\left[-\frac{c\gamma t}
                   {d+\log M+\log(1/\delta)+\gamma^2/m}\right]
       \Ent_\pi(g_0)+\delta
\end{equation}
for all \(t\ge0\).
\end{theorem}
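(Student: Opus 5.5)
We will prove the estimate by a modified-entropy (entropic hypocoercivity) argument, with the spatial nonlinearity handled by truncation. The mechanism is as follows. Since \(\pi\) is invariant, the dissipation of \(\Ent_\pi(g_t)\) comes only from the refreshment and bounce parts, and it controls the velocity Fisher-type quantity \(\gamma\int(\Pi_v g\log\Pi_v g-\ldots)\). Precisely, refreshment gives
\[
 \frac{d}{dt}\Ent_\pi(g_t)\le-\gamma\Bigl(\Ent_\pi(g_t)-\Ent_{\pi_x}(\Pi_vg_t)\Bigr).
\]
This is the convexity of entropy under the jump to the velocity average. The missing spatial part \(\Ent_{\pi_x}(\Pi_v g_t)\) must be recovered from transport. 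For this we will use the log-Sobolev inequality for \(\pi_x\) (constant \(m\), by Bakry--Émery) applied to \(h=\Pi_v g_t\). We will then control \(\int|\nabla_x\sqrt h|^2\) by a mixed term generated by the free-transport part \(v\cdot\nabla_x\).

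\textbf{Step 1 (a priori bounds).} Since \(0\le g_0\le M\) and \(\mathcal P_t\) is Markov with invariant \(\pi\), we have \(0\le g_t\le M\) for all \(t\). Hence \(\log g_t\le\log M\), which is the source of the \(\log M\) in the denominator. We will not work with \(\log g\) directly. Instead we truncate at a small level \(\eta\), replacing \(g\) by \(g\vee\eta\), or equivalently working with \(\phi(g)=g\log g\) modified below \(\eta\). The choice \(\eta\approx\delta/\log(M/\delta)\) makes the entropy lost by truncation at most \(\delta/2\). This truncation costs a factor \(\log(1/\delta)\) in the range of \(\log g\), giving the denominator terms \(\log M+\log(1/\delta)\).

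\textbf{Step 2 (modified functional).} Define
\[
 \mathcal H(g)=\Ent_\pi(g)+\varepsilon\int \nabla_x\Pi_v g\cdot J(g)\,\dd\pi_x .
\]
Here \(J(g)\) is the velocity flux \(\int v\,\Lambda(g)\dd\varphi_d\), i.e. the first moment of the velocity fluctuation, and \(\Lambda\) is a weight like \(1/\Pi_vg\) on the truncated region. This is the entropic analogue of the Dolbeault--Mouhot--Schmeiser correction. Alternatively, one can use the convex interpolation of \(g\) with its velocity average, \(\Ent_\pi((1-\varepsilon)g+\varepsilon\Pi_vg)\)-type twisted terms. With \(\varepsilon\sim\gamma/(\gamma^2+m(d+\log(M/\delta)))\), we will show three things.

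First, \(\mathcal H\) is comparable to \(\Ent_\pi\) up to a factor 2 (plus \(\delta\)). Second, the transport part contributes \(-\varepsilon\int|\nabla_x\Pi_vg|^2/\Pi_vg\), which by log-Sobolev is at most \(-2\varepsilon m\,\Ent_{\pi_x}(\Pi_vg)\). Third, the error terms are absorbed by the velocity dissipation.

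The error terms come from three sources. One is the bounce operator acting on \(J\): bounces are \(\pi\)-reversible in velocity given \(x\), and their rate is bounded by \(|v||\nabla U|\), so they are controlled via \(\nabla^2U\preceq L\). Another is the second velocity moments, which produce the factor \(d\) through \(\E|v|^2=d\). The last is refreshment acting on \(J\), giving a factor \(\gamma\) and hence the \(\gamma^2/m\) term after Cauchy--Schwarz.

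\textbf{Step 3 (Grönwall).} We aim for
\[
 \frac{d}{dt}\mathcal H\le-\frac{c\gamma}{d+\log M+\log(1/\delta)+\gamma^2/m}\bigl(\mathcal H-\delta/2\bigr).
\]
Integrating this and using the comparability \(\Ent\le2\mathcal H\) (up to \(\delta\)) yields the stated bound.

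The main obstacle is Step 2's error terms in entropy rather than \(L^2\). The weighted cross term involves \(\nabla_x\log\Pi_vg\) against velocity fluctuations, and bounding it requires a pointwise control of the ratio \(g/\Pi_v g\). This is exactly where the \(L^\infty\) bound \(M\) and the truncation level \(\delta\) enter, only logarithmically through entropy–variance comparison on dyadic level sets. The bounce term, which is non-smooth and of rate \((v\cdot\nabla U)_+\), must also be shown not to spoil the dimension dependence. I would first try to handle both with a Cauchy--Schwarz inequality against the velocity Dirichlet form. For the bounce term, that Cauchy--Schwarz would use the reflection symmetry \(\varphi_d\circ R_x=\varphi_d\) to keep constants dimension-free.
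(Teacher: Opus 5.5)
Your entropy dissipation inequality $\frac{d}{dt}\Ent_\pi\le-\gamma\Ent_v$ and the Gr\"onwall scheme (including the $\gamma^2/m$ term from Young's inequality on $\gamma$ times the cross term) match the paper. However, Step 2 has a genuine gap for two reasons.

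\textbf{The cross term.} With $\Lambda\approx1/\Pi_vg$, your correction is essentially $\int\nabla_x\log g_x\cdot j\,\dd\pi_x$. This is controlled only by the spatial Fisher information of $g_x$, not by $\Ent_\pi$, and it need not even be finite for a merely bounded $g_0$. So the claimed comparability $\mathcal H\approx\Ent_\pi$ fails. Its time derivative also produces second-order terms, such as $\int(j\otimes j/g_x+\Theta):\nabla_x^2\log g_x\,\dd\pi_x$ and $\int j\cdot\nabla_x(\nabla_x^*j/g_x)\,\dd\pi_x$. Nothing can absorb these: refreshment is a jump mechanism and dissipates only the zeroth-order quantity $\gamma\Ent_v$, with no velocity Fisher information. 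In $L^2$ this derivative loss is cured by $(1+\nabla^*\nabla)^{-1}$, which has no linear entropic analogue.

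The missing idea is a derivative-free cross term. The paper uses the Wasserstein entropy current $\COT(g)=\int j\cdot(x-T_{g_x}(x))\,\dd\pi_x$, where $T_{g_x}$ is the Brenier map onto $\pi_x$. Talagrand's inequality (for $\pi_x$ and for $\varphi_d$) gives $|\COT|\le\Ent_\pi/\sqrt m$. Its derivative yields $-\Ent_x$ directly, with no Fisher-information or log-Sobolev step. This uses the weak acceleration inequality of Li--Lu, Gaussian entropy duality with test function $v\cdot(\Id-J)v/4$, and the Monge--Amp\`ere identity $\log g_x=U-U(T_{g_x})+\log\det J$.

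\textbf{The bounce defect.} You have not identified the term that makes BPS harder than underdamped Langevin. In the first-moment equation the bounces produce the force $-Bg_x\nabla U$, with $B=2\int(v\cdot n)_+^2g_{v\mid x}\dd\varphi_d$, rather than $-g_x\nabla U$. The defect $(1-B)g_x\nabla U$ vanishes only at local velocity equilibrium and is paired with the unbounded field $\nabla U$. Bounding it through the rate $|v||\nabla U|$ and $\nabla^2U\preceq L$, as you propose, puts $L$ (hence $\kappa$) into the rate, whereas the rate in the theorem is independent of $L$.

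The paper proceeds as follows.
\begin{enumerate}
\item Convexity, $\nabla U\cdot\xi_{g_x}\ge U-U(T_{g_x})$, together with $B\ge0$, reduces the defect to $(1-B)[U-U(T_{g_x})]$.
\item Entropy duality against the one-sided Gaussian moment generating function, which is finite only for $z<1/4$, bounds this by $44rk+[U-U(T_{g_x})]^2/(16r)$ on $H_r=\{|U-U(T_{g_x})|\le r\}$.
\item The square is absorbed by the retained log-determinant defect $D_J$ and $\frac12\Ent_x$, via Jensen's inequality for $\log\cosh$; this needs $r\ge d+1$.
\item On $H_r^c$, the bounds $g\le M$ and $\pi_x(U>s)\le2^de^{-s/2}$ give the remainder $5M2^{d/2}(r+3)e^{-r/4}$.
\end{enumerate}
Choosing $r\asymp d+\log M+\log(1/\delta)$ is what produces both the denominator and the additive $\delta$. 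A low-level truncation $g\vee\eta$ plays no role here. It could not be propagated anyway, since it is not a solution of the forward equation.
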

\end{samepage}

The density bound and the additive remainder are consistent with the obstruction of Monmarch\'e and Wang~\cite{MonmarcheWangEntropy}*{Theorem~2}: even for a standard Gaussian target, BPS admits no multiplicative KL contraction that is uniform over initial laws with small relative entropy. In~\eqref{eq:master}, the rate depends on \(\log M\) and \(\log(1/\delta)\), so the estimate gives no such uniform contraction.

\begin{corollary}[Gaussian cold start]\label{cor:cold}
Suppose BPS is initialized from
\[
 \mu_0=N(0,L^{-1}\Id)\otimes N(0,\Id).
\]
For every \(0<\delta\le1/2\), there is a constant refreshment rate \(\gamma>0\) such that \(\KL(\mu_0\mathcal P_T\|\pi)\le\delta\) at a deterministic time \(T\ge0\) satisfying
\begin{equation}\label{eq:time-order}
 T\le\frac{C}{\sqrt{m}}
 \sqrt{d\log(e\kappa)+\log(1/\delta)}
 \log\!\left(\frac{4(1+d\log\kappa)}{\delta}\right).
\end{equation}
Here \(C\) is a universal constant.
\end{corollary}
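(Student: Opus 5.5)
The plan is to apply Theorem~\ref{thm:master} with accuracy \(\delta/2\). This needs an explicit density bound \(M\) and an entropy bound for the cold start. We then pick \(\gamma\) to balance the two \(\gamma\)-dependent terms in the rate.

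\emph{Step 1: density and entropy of the initial law.} The density of \(\mu_0\) relative to \(\pi\) is
\[
 g_0(x,v)=\Big(\tfrac{L}{2\pi}\Big)^{d/2} Z\, e^{U(x)-L|x|^2/2}.
\]
The velocity factors cancel because both marginals are \(\varphi_d\). Since \(U(0)=0\), \(\nabla U(0)=0\) and \(\nabla^2U\preceq L\Id\), we have \(U(x)\le L|x|^2/2\). Since \(\nabla^2U\succeq m\Id\), we have \(U(x)\ge m|x|^2/2\), and hence \(Z\le(2\pi/m)^{d/2}\). Together these give
\[
 g_0\le \kappa^{d/2}=:M,\qquad \Ent_\pi(g_0)\le \log M=\tfrac d2\log\kappa .
\]
The entropy bound holds because \(\int g_0\log g_0\dd\pi\le\log\sup g_0\).

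\emph{Step 2: choice of \(\gamma\) and \(T\).} Apply \eqref{eq:master} with \(\delta/2\in(0,1/4]\) in place of \(\delta\). Set
\[
 D=d+\tfrac d2\log\kappa+\log(2/\delta).
\]
Up to universal constants, \(D\) is comparable to \(d\log(e\kappa)+\log(1/\delta)\), using \(\log(2/\delta)\le 2\log(1/\delta)\) for \(\delta\le1/2\). Choose \(\gamma=\sqrt{mD}\). Then the denominator in the exponent is \(D+\gamma^2/m=2D\), and the rate becomes
\[
 \frac{c\gamma}{2D}=\frac{c\sqrt m}{2\sqrt D}.
\]
This gives
\[
 \KL(\mu_0\mathcal P_t\|\pi)\le 2\exp\!\Big[-\tfrac{c\sqrt m\,t}{2\sqrt D}\Big]\tfrac d2\log\kappa+\tfrac\delta2 .
\]
The first term is at most \(\delta/2\) as soon as the exponential factor is at most \(\delta/(2d\log\kappa)\). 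It therefore suffices to take
\[
 T=\frac{2\sqrt D}{c\sqrt m}\log\!\Big(\frac{4(1+d\log\kappa)}{\delta}\Big).
\]
This \(T\) is nonnegative and deterministic, and it satisfies \eqref{eq:time-order} after absorbing constants. In the degenerate case \(\kappa=1\) the target equals \(\mu_0\) and there is nothing to prove.

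\emph{Main obstacle.} There is no real difficulty beyond Theorem~\ref{thm:master} itself. The only delicate points are the two-sided quadratic bounds on \(U\), which produce \(M=\kappa^{d/2}\) instead of something exponential in \(d\kappa\), and keeping the logarithmic factors within the stated form.
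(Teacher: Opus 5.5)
Your proposal is correct and matches the paper's proof essentially step for step. It uses the same bound \(g_0\le\kappa^{d/2}=M\) from \(U(x)\le L|x|^2/2\) and \(Z\le(2\pi/m)^{d/2}\), applies Theorem~\ref{thm:master} at accuracy \(\delta/2\) with \(\gamma=\sqrt{mD}\) (your \(D\) is exactly the paper's \(\ell=d+\log M+\log(2/\delta)\)), and takes the same horizon, except that you put the slightly larger factor \(\log(4(1+d\log\kappa)/\delta)\) in place of the paper's \(\log(4(1+\log M)/\delta)\), which matches \eqref{eq:time-order} directly.
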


Following Lu and Luo~\cite{LuLuo}*{Section 3.3}, let \(Q_\nabla(T)\) denote the expected number of gradient queries used by windowed thinning up to time \(T\).

\begin{corollary}[Expected gradient-query complexity]\label{cor:queries}
Suppose BPS is initialized from the Gaussian cold start in Corollary~\ref{cor:cold}. For every \(0<\delta\le1/2\), there are a constant refreshment rate \(\gamma>0\) and a deterministic horizon \(T\ge0\) such that exact simulation by windowed thinning attains \(\KL(\mu_0\mathcal P_T\|\pi)\le\delta\) with expected gradient-query cost
\begin{equation}\label{eq:query}
 Q_\nabla(T)
 \le C\sqrt{\kappa d}\,
 \sqrt{d\log(e\kappa)+\log(1/\delta)}
 \log\!\left(\frac{4(1+d\log\kappa)}{\delta}\right).
\end{equation}
Here \(C\) is a universal constant.
\end{corollary}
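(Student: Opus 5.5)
The plan is to combine Corollary~\ref{cor:cold} with the expected-query bound for windowed thinning from Lu and Luo~\cite{LuLuo}*{Section 3.3}. For BPS on an \(m\)-strongly convex, \(L\)-smooth target started from a law with controlled second moments, that bound has the form
\[
 Q_\nabla(T)\le C\bigl(1+T(\gamma+\sqrt{L d}\,)\bigr).
\]
Here \(\gamma T\) counts refreshment events. The term \(\sqrt{Ld}\,T\) comes from bouncing: the thinning windows have length of order \(1/\sqrt{Ld}\), each window costs \(O(1)\) expected queries, and the bound \(\lambda(x,v)\le|v|\,|\nabla U(x)|\) is controlled along the trajectory. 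The moment control holds because \(|v|^2\) has mean \(d\) under \(\varphi_d\), and \(L\,\E|x|^2\lesssim \kappa d\) stays bounded along the flow. I will take this estimate as given from the cited section, checking only that its hypotheses hold for the cold start \(\mu_0=N(0,L^{-1}\Id)\otimes N(0,\Id)\). They do, since \(L\,\E_{\mu_0}|x|^2=d\) and the velocity marginal is exactly \(\varphi_d\).

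The key step is to recover the refreshment rate \(\gamma\) and horizon \(T\) from the proof of Corollary~\ref{cor:cold}. Write \(A=d\log(e\kappa)+\log(1/\delta)\). Applying Theorem~\ref{thm:master} with \(\delta/2\) in place of \(\delta\) requires three ingredients:
\begin{itemize}
 \item the cold-start density satisfies \(\log M\lesssim d\log(e\kappa)\), which follows from Gaussian comparison \(\tfrac m2|x|^2\le U\le\tfrac L2|x|^2\) together with a bound on \(Z\);
 \item the initial entropy satisfies \(\Ent_\pi(g_0)\le 1+d\log\kappa\);
 \item the refreshment rate is \(\gamma=\sqrt{mA}\).
\end{itemize}
With this choice \(\gamma^2/m=A\), so the denominator in \eqref{eq:master} is \(O(A)\). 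The rate is then \(c\gamma/A=c\sqrt{m/A}\). Taking
\[
 T=C\sqrt{A/m}\,\log\bigl(4(1+d\log\kappa)/\delta\bigr)
\]
gives \eqref{eq:time-order} and \(\KL\le\delta\). I will reuse exactly this pair \((\gamma,T)\).

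Substituting into the query bound gives three contributions.
\begin{itemize}
 \item Refreshment: \(\gamma T=C A\log(\cdot)\). Since \(A\le d\log(e\kappa)+\log(1/\delta)\), we have \(A\le\sqrt A\,\sqrt{d\log(e\kappa)+\log(1/\delta)}\). This still needs \(\sqrt A\lesssim\sqrt{\kappa d}\). That holds if \(\log(1/\delta)\lesssim\kappa d\), and otherwise needs more care.
 \item Bouncing: \(\sqrt{Ld}\,T=C\sqrt{\kappa d}\sqrt A\log(\cdot)\), which is exactly the right-hand side of \eqref{eq:query}.
 \item Constant: the additive \(1\) is absorbed since \(\sqrt{\kappa d}\sqrt A\log(\cdot)\ge c\), using \(\log(4/\delta)\ge\log 8\).
\end{itemize}

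The main obstacle is the refreshment term \(\gamma T\) when \(\sqrt A\) is not dominated by \(\sqrt{\kappa d}\). This happens, for example, when \(\log(e\kappa)>\kappa\) fails to be compensated, or when \(\log(1/\delta)\gg\kappa d\). I would first try to bypass it through the structure of the thinning cost: a refreshment event need not cost a gradient query, since the velocity resample is free and only the bounce clock requires queries. If the cited bound counts queries only per thinning window, then \(\gamma T\) does not appear at all and the estimate is clean. Failing that, I would cap \(\gamma\) at \(\min\{\sqrt{mA},\sqrt{Ld}\}\). When \(\gamma<\sqrt{mA}\), the denominator in \eqref{eq:master} is still \(O(A)\), since \(\gamma^2/m\le A\). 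The rate \(\gamma/A\) is then smaller, but in that regime \(\gamma=\sqrt{Ld}\), so \(\gamma T\) equals the bouncing term up to constants. I would verify that the resulting \(T\) still gives the same product \(\sqrt{Ld}\,T\) bound.
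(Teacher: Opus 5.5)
\textbf{There is a gap in how you handle the query estimate.} Your overall route is the paper's: reuse the refreshment rate and horizon from Corollary~\ref{cor:cold}, then insert $T$ into the windowed-thinning estimate of Lu and Luo. The problem is that you work with a guessed form $Q_\nabla(T)\le C(1+T(\gamma+\sqrt{Ld}))$ of that estimate, and the obstacle this creates is never resolved.

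Only the regime $\log(1/\delta)\gg\kappa d$ matters. Since $\log(e\kappa)\le\kappa$ for $\kappa\ge1$, you always have $d\log(e\kappa)\le\kappa d$. In that regime your cap does not work.
\begin{itemize}
\item With $\gamma=\sqrt{Ld}<\sqrt{mA}$, the rate in \eqref{eq:master} is of order $\sqrt{Ld}/A$.
\item Hence $\sqrt{Ld}\,T\asymp A\log(\cdot)$, which exceeds the right side of \eqref{eq:query} by a factor $\sqrt{A/(\kappa d)}$.
\item So the final check you promise would fail.
\end{itemize}
No other choice of $\gamma$ helps either. Every horizon produced by \eqref{eq:master} satisfies $T\gtrsim (A/\gamma)\log(\cdot)$, hence $\gamma T\gtrsim A\log(\cdot)$. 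A query bound containing a $\gamma T$ term therefore cannot yield \eqref{eq:query} when $A\gg\kappa d$.

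\textbf{What closes the argument is the actual estimate.} By \cite{LuLuo}*{Proposition 9, (27)}, with window length $(Ld)^{-1/2}$,
\[
Q_\nabla(T)\le5\sqrt{Ld}\,T\qquad\text{whenever } T\ge\max\{L^{-1/2},\gamma/(4L)\}.
\]
So your first bypass is the right one: there is no $\gamma T$ term and no additive constant. In exchange, two horizon restrictions must be verified, and your proposal checks neither. This verification is exactly what the paper's proof does.

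Take $\gamma=\sqrt{mA}$ and $T=C_0\sqrt{A/m}\,\log\bigl(4(1+d\log\kappa)/\delta\bigr)$ with $C_0\ge1$.
\begin{itemize}
\item $T\ge L^{-1/2}$ holds because $A\ge d\ge1$, $m\le L$, and the logarithm is at least $\log 8>1$.
\item The second restriction holds because $T/(\gamma/(4L))=4C_0\kappa\log\bigl(4(1+d\log\kappa)/\delta\bigr)>1$.
\end{itemize}
Substituting then gives $5\sqrt{Ld}\,T=5C_0\sqrt{\kappa d}\,\sqrt{A}\,\log\bigl(4(1+d\log\kappa)/\delta\bigr)$. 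This is \eqref{eq:query} in every regime, including $\log(1/\delta)\gg\kappa d$.
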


The bound~\eqref{eq:query} is \(\widetilde O(d\sqrt\kappa)\); it relies on the windowed-thinning estimate of Lu and Luo~\cite{LuLuo}*{Proposition 9}. By data processing, the KL guarantees in Corollaries~\ref{cor:cold} and~\ref{cor:queries} also hold for the position marginal.

Following Lu~\cite{LuUnderdampedEntropy}, we add to the entropy a small multiple of the Wasserstein entropy current, which supplies the spatial dissipation that refreshment alone does not provide. For BPS, the time derivative of the current contains an additional defect term, a force correction that vanishes at local velocity equilibrium. We use convexity to express the defect term in terms of the potential difference along the transport, and retain the nonnegative log-determinant defect in the stress estimate of~\cite{LuUnderdampedEntropy} to absorb it. On the region where the absolute potential difference along the transport is at most a cutoff level \(r\), a Gaussian entropy-duality bound and Jensen's inequality control the defect term at a velocity-entropy cost proportional to \(r\). Outside this region, the propagated density bound \(g_t\le M\) makes the contribution exponentially small in \(r\); this is the source of the additive remainder in Theorem~\ref{thm:master}.

\subsection{Related works}
The bouncy particle sampler originated with Peters and de With~\cite{PetersDeWith2012} and was developed into a general-purpose sampler by Bouchard-C\^ot\'e, Vollmer, and Doucet~\cite{BouchardCoteVollmerDoucet}. Harris-type arguments establish its geometric ergodicity under tail and curvature conditions~\cites{DeligiannidisBouchardCoteDoucet2019,DurmusGuillinMonmarche2020}. Bierkens, Kamatani, and Roberts derived scaling limits of BPS in high dimension and for anisotropic targets~\cites{BierkensKamataniRoberts2022,BierkensKamataniRoberts2025}. Agrawal, Bierkens, Kamatani, and Roberts~\cite{AgrawalBierkensKamataniRoberts2025} studied the transient phase of piecewise deterministic samplers through fluid limits.

Explicit convergence rates come from hypocoercivity. Andrieu, Durmus, N\"usken, and Roussel~\cite{AndrieuDurmusNueskenRoussel2021} developed \(L^2\) hypocoercivity for a broad class of piecewise deterministic samplers, including BPS. Lu and Wang~\cite{LuWangPDMP}*{Theorem~1} made the \(L^2\) rate explicit in \(d\), \(m\), and \(L\), building on the analysis of underdamped Langevin dynamics by Cao, Lu, and Wang~\cite{CaoLuWangUnderdamped}. For the potentials considered here and \(\gamma=\sqrt{dm}\), their bound on the BPS rate is of order \(\sqrt{m/d}\). Eberle and L\"orler~\cite{EberleLorlerLifts} interpret BPS as a nonreversible lift of overdamped Langevin dynamics and show that such a lift improves the relaxation time by at most a square-root factor. In relative entropy, Monmarch\'e and Wang~\cite{MonmarcheWangEntropy}*{Theorem~2} showed that BPS admits no uniform multiplicative contraction over initial laws with small relative entropy, even for a standard Gaussian target. The density bound and the additive remainder in Theorem~\ref{thm:master} are consistent with this obstruction; see the discussion after the theorem.

For query complexity, Lu and Luo~\cite{LuLuo} simulate BPS exactly by windowed thinning, with an expected number of gradient queries proportional to the time horizon. Combined with the \(L^2\) rate of Lu and Wang, this gives \(\widetilde O(d^2\sqrt\kappa)\) gradient queries for total-variation accuracy from a Gaussian cold start~\cite{LuLuo}*{Theorem~3}. By Pinsker's inequality, Corollary~\ref{cor:queries} gives the same guarantee with \(\widetilde O(d\sqrt\kappa)\) queries. The gain comes from how the initial law enters. In the \(L^2\) analysis, the logarithm of its \(\chi^2\)-divergence, of order up to \(d\log\kappa\) for the cold start, multiplies the inverse rate \(\sqrt{d/m}\). In Theorem~\ref{thm:master}, the density bound enters only through \(\log M\) in the denominator of the rate, alongside \(d\). This yields the single square-root factor in~\eqref{eq:time-order}.

\subsection*{Organization}
Section~\ref{sec:moments} develops the moment equations and recalls the Wasserstein entropy current. Section~\ref{sec:current} establishes the closed entropy and current inequalities. Section~\ref{sec:closure} then proves Theorem~\ref{thm:master} and Corollaries~\ref{cor:cold} and~\ref{cor:queries}. Appendix~\ref{sec:weak} supplies the regularity and approximation arguments for bounded initial densities.

\subsection*{AI Usage} This research was motivated by trying to reduce the cold-start complexity of our previous work \cite{LuLuo} and to better understand the obstruction to entropy convergence of BPS \cite{MonmarcheWangEntropy}. Through our interactions with GPT 6 Astra, including various human suggestions to adapt the argument of accelerated entropy convergence in \cite{LuUnderdampedEntropy} and \cite{LiLuHypercontractivity}, the AI model suggested using a version of entropy decay with a defect term. The authors subsequently simplified the estimate, rewrote the proofs, and prepared the manuscript. We take full responsibility for its contents. 

\section{Preliminaries}\label{sec:moments}

For a probability density~$g$ relative to~$\pi$, let $g_x=\Pi_v g$ be the spatial density relative to~$\pi_x$ and $g_{v\mid x}(x,v)=g(x,v)/g_x(x)$ the conditional velocity density relative to~$\varphi_d$ when $g_x(x)>0$. On $\{g_x=0\}$, set $g_{v\mid x}=1$. Following~\cite{LuUnderdampedEntropy}*{Section 2}, write $j=\Pi_v(vg)$ for the velocity current.
Entropy decomposes as
\[
 \Ent_x(g_x)=\int g_x\log g_x\dd\pi_x,\qquad
 \Ent_v(g)=\int g_x\,\Ent_{\varphi_d}(g_{v\mid x})\dd\pi_x,\qquad
 \Ent_\pi(g)=\Ent_x(g_x)+\Ent_v(g).
\]
For a time-dependent density~$g_t$, write $g_{x,t}=\Pi_v g_t$. We suppress the time subscript in calculations at a fixed time.

Since \(g_t\) is the density relative to \(\pi\), it satisfies the forward equation \(\partial_tg=\mathcal L^*g\) in distributions, where \(\mathcal L^*\) is the adjoint of \(\mathcal L\) in \(L^2(\pi)\). Explicitly,
\begin{equation}\label{eq:forward}
 \partial_tg=-v\cdot\nabla_xg
   +(-v\cdot\nabla U)_+\,[g(x,R_xv)-g(x,v)]
   +\gamma(\Pi_vg-g).
\end{equation}
Let \(\nabla_x^*=-\operatorname{div}+\nabla U\cdot\) denote the adjoint of \(\nabla_x\) in \(L^2(\pi_x)\). Taking the zeroth and first velocity moments of~\eqref{eq:forward} gives, in distributions,
\begin{align}
 \partial_t g_x&=\nabla_x^*j,\label{eq:continuity}\\
 \partial_tj&=-\nabla_x g_x
                +\nabla_x^*\!\left(\frac{j\otimes j}{g_x}+\Theta\right)
                -\gamma j+\mathfrak d\, g_x\nabla U\,,\label{eq:moment}\\
 \Theta&=\Pi_v(v\otimes v\,g)-\frac{j\otimes j}{g_x}-g_x\Id,
 \label{eq:stress}\\
 B&=2\int(v\cdot n)_+^2g_{v\mid x}(x,v)\dd\varphi_d(v),\qquad
 \mathfrak d=1-B.
 \label{eq:one-sided-moment}
\end{align}
Here \(\Theta\) is the centered stress tensor, with quotients by \(g_x\) set to zero on \(\{g_x=0\}\). The quantity~$B=B(x)$ is the one-sided second moment of~$g_{v\mid x}$ in the direction~$n=n(x)$, normalized so that $B=1$ when $g_{v\mid x}=1$. The signed force defect~$\mathfrak d=\mathfrak d(x)$ therefore vanishes at local velocity equilibrium. We suppress the spatial arguments of~$B$ and~$\mathfrak d$ in the moment equations.
Equations~\eqref{eq:continuity}--\eqref{eq:moment} parallel the moment equations for underdamped Langevin dynamics; see~\cite{LuUnderdampedEntropy}*{(4.1) and (4.3)}. The only new term is the defect term \(\mathfrak d\,g_x\nabla U\) in~\eqref{eq:moment}, which arises because BPS uses bounces in place of the force \(-\nabla U\).

To derive~\eqref{eq:moment}, multiply~\eqref{eq:forward} by \(v\) and apply \(\Pi_v\). We compute the transport, bounce, and refreshment contributions to \(\partial_tj\) in turn. The transport term  contributes
\[
 -\operatorname{div}\Pi_v(v\otimes v\,g)
 =\nabla_x^*\Pi_v(v\otimes v\,g)-\Pi_v(v\otimes v\,g)\nabla U.
\]
For the bounce term, recall that \(R_x\) is an orthogonal reflection with \(R_x\nabla U=-\nabla U\). The substitution \(v\mapsto R_xv\) therefore preserves \(\varphi_d\) and maps \((-v\cdot\nabla U)_+\) to \((v\cdot\nabla U)_+\), so that
\[
 \Pi_v\!\left[(-v\cdot\nabla U)_+\,v\,g(x,R_xv)\right]
 =\Pi_v\!\left[(v\cdot\nabla U)_+\,R_xv\,g\right].
\]
Hence the bounce contribution is \(\Pi_v[((v\cdot\nabla U)_+R_xv-(-v\cdot\nabla U)_+v)g]\). In the last transport term, write \(\Pi_v(v\otimes v\,g)\nabla U=\Pi_v[(v\cdot\nabla U)v\,g]\) and split \(v\cdot\nabla U=(v\cdot\nabla U)_+-(-v\cdot\nabla U)_+\). The negative parts cancel against the bounce contribution, and the two terms combine into
\[
 \begin{aligned}
 \Pi_v\!\left[(v\cdot\nabla U)_+(R_xv-v)g\right]
 &=-2g_x\nabla U\int(v\cdot n)_+^2g_{v\mid x}(x,v)\dd\varphi_d(v)\\
 &=-g_xB\nabla U.
 \end{aligned}
\]
The first equality uses \(R_xv-v=-2(v\cdot n)n\), \((v\cdot\nabla U)_+=|\nabla U|(v\cdot n)_+\), and \(g=g_xg_{v\mid x}\); the second is the definition~\eqref{eq:one-sided-moment}. Finally, since \(\varphi_d\) has mean zero, refreshment contributes \(\gamma\Pi_v[v(\Pi_vg-g)]=-\gamma j\). Adding the three contributions, and then using the stress decomposition~\eqref{eq:stress} and \(\nabla_x^*(g_x\Id)=-\nabla_x g_x+g_x\nabla U\), gives
\[
 \begin{aligned}
 \partial_tj
 &=\nabla_x^*\Pi_v(v\otimes v\,g)-g_xB\nabla U-\gamma j\\
 &=\nabla_x^*\!\left(\frac{j\otimes j}{g_x}+\Theta+g_x\Id\right)
     -g_xB\nabla U-\gamma j\\
 &=-\nabla_x g_x
     +\nabla_x^*\!\left(\frac{j\otimes j}{g_x}+\Theta\right)
     -\gamma j+\mathfrak d\, g_x\nabla U.
 \end{aligned}
\]
Thus the bounce contribution, combined with the last transport term, gives the force \(-B\,g_x\nabla U\). The defect term \(\mathfrak d\,g_x\nabla U\) is its deviation from the Hamiltonian force \(-g_x\nabla U\).

We next recall the Wasserstein entropy current, as defined in \cite{LuUnderdampedEntropy}.
Let \(T_{g_x}\) be the Brenier map~\cite{Brenier1991} from \(g_x\pi_x\) to \(\pi_x\). The Brenier displacement and Wasserstein entropy current are
\begin{equation}\label{eq:current-definition}
 \xi_{g_x}=x-T_{g_x}(x),\qquad
 \COT(g)=\int j\cdot\xi_{g_x}\dd\pi_x.
\end{equation}
Strong convexity gives \(\pi_x\) a logarithmic Sobolev constant \(m\) by the Bakry--\'Emery criterion~\cite{BakryEmery1985}. The entropy-current bound of Lu~\cite{LuUnderdampedEntropy}*{Lemma 3.2 and its proof} therefore gives
\begin{equation}\label{eq:current-size}
 |\COT(g)|\le\frac2{\sqrt{m}}
       \sqrt{\Ent_x(g_x)\Ent_v(g)}
 \le\frac1{\sqrt{m}}\Ent_\pi(g).
\end{equation}

\section{Entropy and current inequalities}\label{sec:current}
We establish the two differential inequalities that yield entropy decay. The main estimate controls the potential-difference term arising from the defect term. On a truncated region, we use the log-determinant defect defined in~\eqref{eq:logdet-term} together with spatial and conditional velocity entropy. On the complement, the density bound gives an exponentially small remainder.

\begin{proposition}[Entropy inequalities for bounded initial densities]
\label{prop:weak-current}
Suppose \(0\le g_0\le M\) and \(\int g_0\dd\pi=1\). For every constant refreshment rate \(\gamma>0\) and every \(r\ge d+1\), the BPS trajectory satisfies
\begin{align}
 \frac{\dd}{\dd t}\Ent_\pi(g_t)
 &\le-\gamma\Ent_v(g_t),\label{eq:entropy-diss}\\
 \frac{\dd}{\dd t}\COT(g_t)
 &\le-\frac12\Ent_x(g_{x,t})+A_r\Ent_v(g_t)
          -\gamma\COT(g_t)+\eta_r.
 \label{eq:closed-current}
\end{align}
Here
\begin{equation}\label{eq:tail-constant}
 A_r=4+44r,\qquad
 \eta_r=5M\,2^{d/2}(r+3)e^{-r/4}.
\end{equation}
The inequalities hold in distributions in time, and \(\Ent_\pi(g_t)\) and \(\COT(g_t)\) are continuous on \([0,\infty)\).
\end{proposition}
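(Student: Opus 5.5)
We first prove both inequalities for smooth, positive initial densities bounded between two positive constants with enough decay, and then pass to general $0\le g_0\le M$ by the approximation arguments of Appendix~\ref{sec:weak}. The approximation uses lower semicontinuity of entropy, continuity of $\COT$ under weak convergence with entropy bounds (via \eqref{eq:current-size} and stability of Brenier maps), and the fact that $g_t\le M$ is propagated. This last fact holds because the constant $M$ is a stationary supersolution: $\mathcal P_t$ is positivity preserving and $\mathcal P_t^*1=1$.

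\textbf{Entropy dissipation \eqref{eq:entropy-diss}.} We differentiate $\int g\log g\dd\pi$ along \eqref{eq:forward}. The transport part together with the bounce part is the adjoint of the $\pi$-preserving, non-refreshment part of $\mathcal L$. Since the reflection $R_x$ preserves $\varphi_d$ and swaps $(\pm v\cdot\nabla U)_+$, this part contributes $\int (-v\cdot\nabla U)_+[g(R_xv)-g(v)]\log g \dd\pi$ plus the transport term. Using $\log a(b-a)\le b-a$, the total is at most $\int[(-v\cdot\nabla U)_+ g(R_xv)-(v\cdot\nabla U)_+g]\dd\pi$ after the transport term is combined. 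That quantity vanishes by the same swap, so the contribution is $\le 0$. The refreshment term gives $\gamma\int(\Pi_vg-g)\log g\dd\pi\le-\gamma\Ent_v(g)$ by convexity, because $\Pi_v g=g_x$ and $\int (g_x-g)\log g_x\dd\pi=0$.

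\textbf{Current inequality \eqref{eq:closed-current}.} We follow~\cite{LuUnderdampedEntropy}*{Section 4} line by line, using \eqref{eq:continuity}--\eqref{eq:moment}. The time derivative of $\COT$ splits into $\int \partial_t j\cdot\xi\dd\pi_x$ and $\int j\cdot \partial_t\xi\dd\pi_x$. The second term, together with the $-\nabla_xg_x$, $j\otimes j/g_x$ and $\Theta$ contributions of the first, is handled exactly as in~\cite{LuUnderdampedEntropy}. There, displacement convexity gives $-\int\nabla g_x\cdot\xi\le-\Ent_x$ up to the log-determinant defect $\mathcal D\ge0$ of \eqref{eq:logdet-term}, and the stress term is bounded by a multiple of $\Ent_v$. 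This time we do not discard $\mathcal D$; we keep a term $-c\mathcal D$ on the right. The refreshment term gives exactly $-\gamma\COT$. What remains is the defect term $E=\int \mathfrak d\, g_x\nabla U\cdot\xi\dd\pi_x$. By convexity of $U$ along the segment from $T(x)$ to $x$, we have $\nabla U(x)\cdot\xi\ge U(x)-U(T(x))=:\Delta(x)$, with a matching upper bound in terms of $\Delta$ plus a Hessian/determinant correction that is absorbed by $\mathcal D$ and $\Ent_x$. So it suffices to bound $\int|\mathfrak d|\,|\Delta| g_x\dd\pi_x$, modulo absorbed terms.

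\textbf{Truncation.} This is the main obstacle. On $\{|\Delta|\le r\}$ we apply Gaussian entropy duality to $g_{v\mid x}$ with the test function $\lambda(2(v\cdot n)_+^2-1)$. Its exponential moment under $\varphi_d$ is finite for $\lambda<1/4$, which gives $|\mathfrak d|\le C(\lambda^{-1}\Ent_{\varphi_d}(g_{v\mid x})+\lambda)$. Optimizing $\lambda$ pointwise and applying Jensen/Cauchy--Schwarz gives $\int_{|\Delta|\le r}|\mathfrak d||\Delta|g_x\le C r\,\Ent_v+\tfrac14\Ent_x$, roughly. Here the spatial factor $\int |\Delta|\wedge r\, g_x$ is controlled by $\Ent_x$ and $\mathcal D$ through the transport identity $\Ent_x=\int(\Delta+\text{logdet})g_x$. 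This produces the coefficient $A_r=4+44r$ after collecting constants. On $\{|\Delta|>r\}$ we use $|\mathfrak d|\le 1+B$ and $g\le M$, which reduce the term to $M\int(1+2(v\cdot n)_+^2)|\Delta|\mathbf 1_{|\Delta|>r}\dd\pi$. We then bound the $\pi_x$-tail of $|\Delta|$ using Gaussian-type concentration of $U(X)$ and $U(T(X))$ under $\pi_x$. Here $T$ pushes $g_x\pi_x$ to $\pi_x$, and the bound $g_x\le M$ lets us transfer to $\pi_x$. Using an exponential moment $\E e^{|\Delta|/2}\lesssim 2^{d/2}$ (which follows from $\int e^{U/2}\dd\pi_x\le 2^{d/2}$ by strong convexity and smoothness after normalization) and Markov's inequality yields $\eta_r=5M2^{d/2}(r+3)e^{-r/4}$. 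The condition $r\ge d+1$ makes the tail bound clean. The delicate points will be the exact handling of the upper bound on $\nabla U\cdot\xi$ versus $\Delta$, and the exponential moment of $U(T(X))$ under $g_x\pi_x$.
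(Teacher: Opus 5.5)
Your overall architecture matches the paper's: truncation at level $r$ in $|\Delta|$, Gaussian duality on $H_r$, and the density bound on $H_r^c$. The step that fails as written is the reduction of the defect term to $\int|\mathfrak d|\,|\Delta|\,g_x\dd\pi_x$, where $\Delta=U-U(T_{g_x})$.

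You apply displacement convexity to the Hamiltonian part, which discards the Bregman remainder $\nabla U\cdot\xi_{g_x}-\Delta\ge0$. You are then left with $\int\mathfrak d\,g_x\nabla U\cdot\xi_{g_x}\dd\pi_x$, where $\mathfrak d=1-B$ has either sign. On $\{\mathfrak d>0\}$ this requires an upper bound on $\nabla U\cdot\xi_{g_x}$. The natural one, $\nabla U\cdot\xi_{g_x}-\Delta\le\frac L2|\xi_{g_x}|^2$, depends on the Hessian of $U$ along the segment. So it cannot be absorbed by $D_J$, which depends on $J$ alone: a translation has $J=\Id$ and $D_J=0$ but a positive Bregman term. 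Its integral is bounded, via Talagrand, only by $\kappa\Ent_x$, which reintroduces the condition number that $A_r=4+44r$ avoids.

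No upper bound is needed if you never split the force. As in Proposition~\ref{prop:current}, pair $\xi_{g_x}$ with the full covariance term $g_x\Sigma=\Theta+g_x\Id$ (so $-\nabla_xg_x$ is not treated on its own) and with the full bounce force $-Bg_x\nabla U$. Then:
\begin{itemize}
\item since $B\ge0$, convexity gives $-B\nabla U\cdot\xi_{g_x}\le-B\Delta$;
\item stress duality with test function $v\cdot(\Id-J)v/4$ gives $|j/g_x|^2+\Sigma:(\Id-J)\le4k-D_J-\log\det J$;
\item the Monge--Amp\`ere identity~\eqref{eq:MA} turns $-B\Delta-\log\det J$ into $-\log g_x+\mathfrak d\,\Delta$.
\end{itemize}
So the defect enters as $\mathfrak d\,\Delta-D_J$, with $\Delta$ in place of $\nabla U\cdot\xi_{g_x}$. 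Note that $D_J$ comes from this combination, not from displacement convexity. Equivalently, keep the Bregman term from the Hamiltonian part: its combined coefficient is $-(1-\mathfrak d)=-B\le0$.

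The second missing piece is the absorption on $H_r$. Duality with $z=-\Delta/(44r)$ and Lemma~\ref{lem:gaussian-moment} give pointwise $\mathfrak d\,\Delta\le44rk+\Delta^2/(16r)$. The spatial quantity to control is therefore the quadratic $\int_{H_r}g_x\Delta^2/r\dd\pi_x$. A first-order quantity such as your $\int g_x(|\Delta|\wedge r)\dd\pi_x$ cannot be bounded linearly by $\Ent_x$ and $\int g_xD_J\dd\pi_x$: for $g_x=1+O(\epsilon)$ it is of order $\epsilon$, while they are of order $\epsilon^2$. The constants also matter, since only one copy of $-D_J$ is available and $-\frac12\Ent_x$ must survive.

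The paper's mechanism is as follows.
\begin{itemize}
\item On $H_r$, $\Delta^2/(16r)\le r\log\cosh(\Delta/(2r))$.
\item Write $\Delta=\log g_x-\sum_i\log\lambda_i$ and $\frac12D_J=\sum_i\log\cosh(\frac12\log\lambda_i)$.
\item Apply Jensen to the even convex function $\log\cosh$, with weights $1/r$ at $\frac12\log g_x,-\frac12\log\lambda_1,\dots,-\frac12\log\lambda_d$ and $1-(d+1)/r$ at $0$. This bounds the expression by $\log\cosh(\frac12\log g_x)+\frac12D_J$.
\item Finally, $\log\cosh(\frac12\log g_x)\le\frac12(\log g_x-1+1/g_x)$, which integrates to at most $\frac12\Ent_x$.
\end{itemize}
This Jensen step is where $r\ge d+1$ is used; the tail bound does not need it.

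There are also smaller points.
\begin{itemize}
\item The bound $\int e^{U/2}\dd\pi_x\le2^{d/2}$ is false uniformly in $\kappa$: for $U$ close to $|x|$ in one dimension the integral is close to $2$. Convexity through $U(2y)\ge2U(y)$ gives $2^d$. The factor $2^{d/2}e^{-r/4}$ comes from the square root in Cauchy--Schwarz against $\int g_x(1+B)^2\dd\pi_x\le9M$. You need that step anyway, because the pushforward identity only controls $U(T_{g_x})$ under $g_x\pi_x$.
\item In the entropy step, $\log a\,(b-a)\le b-a$ is false. The correct inequality $(b-a)\log a\le a-b+b\log b-a\log a$ does yield zero after the swap $v\mapsto R_xv$ and the transport integration by parts. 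The paper sidesteps regularity altogether by conditioning on the last refreshment.
\item Smooth initial data do not produce the regularity that Proposition~\ref{prop:current} requires, since BPS does not smooth. The paper instead mollifies the trajectory in $x$, replacing $U$ by $U_s$ and controlling the resulting commutator by $C\sqrt s$.
\end{itemize}
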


Proposition~\ref{prop:weak-current} parallels the entropy dissipation identity and the current inequality for underdamped Langevin dynamics in~\cite{LuUnderdampedEntropy}*{(3.1) and (6.1)}. Refreshment dissipates the conditional velocity entropy \(\Ent_v\) directly. We therefore keep the current inequality in terms of \(\Ent_v\). The defect term in~\eqref{eq:moment} vanishes at local velocity equilibrium. Without this force correction, the argument of~\cite{LuUnderdampedEntropy} would give
\[
 \frac{\dd}{\dd t}\COT(g_t)
 \le-\Ent_x(g_{x,t})+4\Ent_v(g_t)-\gamma\COT(g_t).
\]
To control the defect term, we use convexity of $U$ and retain the log-determinant defect that the stress bound of~\cite{LuUnderdampedEntropy}*{Lemma 5.3} discards. Comparison with~\eqref{eq:closed-current} shows the price: a coefficient \(A_r\) linear in the cutoff level and the additive remainder \(\eta_r\), which depends on the density bound \(M\).

Subsection~\ref{subsec:current-defects} derives the current inequality for regular densities. Subsection~\ref{subsec:force} controls the defect term and completes the proof of Proposition~\ref{prop:weak-current}. Appendix~\ref{sec:weak} supplies the technical justifications, approximation, and continuity arguments.

\subsection{Current inequality with a log-determinant defect}\label{subsec:current-defects}

Let $J=DT_{g_x}$ denote the almost everywhere derivative of the Brenier map. Define the nonnegative log-determinant defect
\begin{equation}\label{eq:logdet-term}
 D_J(x)=2\log\det\!\left(\frac{\Id+J}{2}\right)-\log\det J.
\end{equation}
The quantity $D_J$ is twice the midpoint Jensen gap of $-\log\det$ between $\Id$ and $J$. Its nonnegativity follows from convexity of $-\log\det$.

Let \(\Sigma(x)\) be the covariance under \(g_{v\mid x}\), so that the centered stress tensor~\eqref{eq:stress} is \(\Theta=g_x(\Sigma-\Id)\). Let \(k(x)=\Ent_{\varphi_d}(g_{v\mid x})\) be the relative entropy of \(g_{v\mid x}\), so that \(\Ent_v(g)=\int g_xk\dd\pi_x\). Here and below, $U-U(T_{g_x})$ means $U(x)-U(T_{g_x}(x))$.

\begin{proposition}[Entropy-current inequality]\label{prop:current}
Suppose \(U\) is smooth and \(g_t\) is a curve of probability densities with \(0\le g_t\le M\) satisfying~\eqref{eq:continuity}--\eqref{eq:one-sided-moment}. Assume that \((g_{x,t},j_t)\) is a regular density-current pair in the sense of Li and Lu~\cite{LiLuHypercontractivity}*{Definition 2.5}, with spatial reference \(\pi_x\). Assume also that \(\Theta_t\) is smooth in position and continuously differentiable in time, and that \(\nabla g_{x,t},\nabla_x^*\Theta_t\in L^2(g_{x,t}^{-1}\pi_x)\), locally uniformly in time. Then, in distributions in time,
\begin{equation}\label{eq:current-full}
 \begin{aligned}
 \frac{\dd}{\dd t}\COT(g_t)
 &\le-\Ent_x(g_{x,t})+4\Ent_v(g_t)-\gamma\COT(g_t)\\
 &\quad+\int g_{x,t}\bigl\{\mathfrak d[U-U(T_{g_x})]-D_J\bigr\}\dd\pi_x\,.
 \end{aligned}
\end{equation}
\end{proposition}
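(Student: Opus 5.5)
We differentiate \(\COT(g_t)=\int j\cdot\xi_{g_x}\dd\pi_x\) in time, following the computation in~\cite{LuUnderdampedEntropy}*{Sections 4--6}. The product rule gives two pieces: \(\int\partial_tj\cdot\xi_{g_x}\dd\pi_x\) and \(\int j\cdot\partial_t\xi_{g_x}\dd\pi_x\). The regularity of the density-current pair in the sense of~\cite{LiLuHypercontractivity}*{Definition 2.5} justifies this differentiation, together with the time derivative of the Brenier displacement along the continuity equation~\eqref{eq:continuity}.

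For the first piece we insert the moment equation~\eqref{eq:moment}. Its four terms are the same as in the underdamped Langevin case except for the defect term \(\mathfrak d\,g_x\nabla U\), so they give:
\begin{itemize}
\item the pressure term \(-\int\nabla g_x\cdot\xi_{g_x}\dd\pi_x\), which by displacement convexity of entropy (the above-tangent inequality along the geodesic from \(g_x\pi_x\) to \(\pi_x\)) is at most \(-\Ent_x(g_x)\);
\item the refreshment term, exactly \(-\gamma\COT\);
\item the kinetic and stress terms \(\int(j\otimes j/g_x+\Theta):D\xi_{g_x}\dd\pi_x\), with \(D\xi=\Id-J\);
\item the new term \(\int g_x\,\mathfrak d\,\nabla U\cdot(x-T_{g_x})\dd\pi_x\).
\end{itemize}
By convexity of \(U\), \(\nabla U(x)\cdot(x-T_{g_x}(x))\ge U(x)-U(T_{g_x}(x))\). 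However, \(\mathfrak d\) is signed, so this inequality does not transfer directly. I will therefore write the defect term as \(\int g_x\mathfrak d[U-U(T_{g_x})]\dd\pi_x\) plus a remainder \(\int g_x\mathfrak d\,R\dd\pi_x\), where \(R=\nabla U\cdot\xi-[U-U(T)]\ge0\) is the Bregman gap. This remainder has to be absorbed, and the plan is to absorb it using the part of the second-piece computation that produces \(-D_J\).

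For the second piece, \(\int j\cdot\partial_t\xi\) combined with the kinetic term \(\int (j\otimes j/g_x):(\Id-J)\) is handled exactly as in~\cite{LuUnderdampedEntropy}. The linearized Monge--Ampère equation gives a Hessian-weighted expression in which the \(j\otimes j\) contributions cancel or carry a favorable sign. That leaves the stress term \(\int g_x(\Sigma-\Id):(\Id-J)\dd\pi_x\). In~\cite{LuUnderdampedEntropy}*{Lemma 5.3} this term is bounded by the Gibbs variational (entropy duality) inequality for the Gaussian, applied with the symmetric matrix tilt built from \(J\). Writing it as a log-moment-generating function, one gets \(\tfrac12\tr(\Sigma-\Id)A\le k+\tfrac12[-\log\det(\Id-A)-\tr A]\) type bounds. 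Choosing the tilt so that \(\Id-A\) involves \((\Id+J)/2\) and \(J\) produces \(4k\) plus exactly \(-D_J\) plus nonnegative leftovers, which Lemma~5.3 discarded. I will redo that step and keep \(-D_J\), which gives \(4\Ent_v\) and \(-\int g_xD_J\dd\pi_x\).

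The main obstacle is the Bregman remainder \(\int g_x\mathfrak d R\). Where \(\mathfrak d\le0\) it has a good sign and can be dropped. Where \(\mathfrak d>0\) we have \(0<\mathfrak d\le1\), since \(B\ge0\), so it is bounded by \(\int g_x R\). I would try to show that this is dominated by the leftover nonnegative terms from the stress/pressure step: the displacement-convexity inequality for \(\Ent_x\) relative to \(\pi_x\) itself has slack exactly \(\int g_x R\dd\pi_x\) plus the \(\log\det\) slack. Concretely, the pressure term equals \(-\Ent_x-\int g_x[R+(\tr(\Id-J)+\log\det J)]\dd\pi_x\), the full above-tangent identity for \(\int\log g_x\,g_x\dd\pi_x\) along the geodesic. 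Retaining the potential part \(-\int g_xR\) there cancels the positive part of the defect remainder, and this yields~\eqref{eq:current-full}.
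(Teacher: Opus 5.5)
\textbf{Gap: where \(-D_J\) comes from.} Your treatment of the signed defect is sound, and it is in substance the paper's step. Splitting \(\mathfrak d\,\nabla U\cdot\xi_{g_x}=\mathfrak d[U-U(T_{g_x})]+\mathfrak dR\) and using \(\mathfrak d\le1\), \(R\ge0\) against the Bregman slack of the pressure term leaves \(-\int g_xBR\dd\pi_x\le0\). The gap is the source of \(-D_J\). You claim the stress estimate alone yields \(4k-D_J\), up to discardable terms. No tilt can do this. At local velocity equilibrium (\(\Sigma=\Id\), \(j=0\), \(k=0\)) the kinetic-plus-stress term vanishes, while \(-D_J<0\) whenever \(J\ne\Id\). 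What the tilt \(v\cdot(\Id-J)v/4\) actually gives, applied to the centered law with entropy \(k-|u|^2/2\) and \(u=j/g_x\), is
\[
 |u|^2+(\Sigma-\Id):(\Id-J)\le4k-|u|^2+\sum_{i=1}^d\Bigl[\lambda_i-1-2\log\frac{1+\lambda_i}{2}\Bigr],
\]
and the last sum is nonnegative. The \(-D_J\) appears only after adding the log-determinant part of the pressure slack. Correctly,
\[
 -\int\nabla g_x\cdot\xi_{g_x}\dd\pi_x\le-\Ent_x(g_x)-\int g_x\bigl[R+\tr(J-\Id)-\log\det J\bigr]\dd\pi_x,
\]
with \(\tr(J-\Id)-\log\det J=\sum_i(\lambda_i-1-\log\lambda_i)\ge0\). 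You wrote \(\tr(\Id-J)+\log\det J\), which has the opposite sign. Then \(\sum_i[\lambda_i-1-2\log\frac{1+\lambda_i}{2}]-\sum_i(\lambda_i-1-\log\lambda_i)=-D_J\). Your final paragraph keeps only \(-\int g_xR\dd\pi_x\). Followed literally, the argument ends with \(+\int g_x\sum_i[\lambda_i-1-2\log\frac{1+\lambda_i}{2}]\dd\pi_x\) where \eqref{eq:current-full} has \(-\int g_xD_J\dd\pi_x\). The repair is to keep both nonnegative pieces of the slack: \(R\) absorbs \(\mathfrak dR\), and the log-determinant piece combines with the stress bound into \(-D_J\).

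\textbf{Smaller points, and the paper's route.} The Li--Lu acceleration inequality does not make the \(j\otimes j\) contributions disappear. It leaves \(+\int g_x|u|^2\dd\pi_x\), which is absorbed only because duality is applied to the centered law with entropy \(k-|u|^2/2\), not \(k\) as in your displayed bound. Also, if \(T_{g_x}\) is only \(BV_{\rm loc}\), the stress pairing by itself has singular part \(-\int g_x(\Sigma-\Id):N\dd\sigma\), weighted by the density of \(\pi_x\), and this has no sign. Only its sum with the pressure's singular part, \(-\int g_x\Sigma:N\dd\sigma\), is nonpositive.

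The paper avoids all of this by never separating pressure from stress:
\begin{itemize}
\item it writes \(-\nabla_xg_x+\nabla_x^*\Theta+\mathfrak d\,g_x\nabla U=\nabla_x^*(g_x\Sigma)-Bg_x\nabla U\) and integrates by parts once;
\item it bounds \(|u|^2+\Sigma:(\Id-J)\le4k-2\log\det\frac{\Id+J}{2}\) by duality;
\item it applies convexity only to \(B\nabla U\cdot\xi_{g_x}\), where \(B\ge0\);
\item it uses the Monge--Amp\`ere identity to rewrite \(-2\log\det\frac{\Id+J}{2}=-\log g_x+[U-U(T_{g_x})]-D_J\).
\end{itemize}
The ``\(1\)'' in \(\mathfrak d=1-B\) thus comes from an exact identity, so no Bregman remainder and no sign split on \(\mathfrak d\) ever arise. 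Once corrected, your computation is this one rearranged.
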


\begin{proof}
We bound the two terms in the weak Wasserstein acceleration inequality of Li and Lu~\cite{LiLuHypercontractivity}*{Lemma 2.7}:
\[
 \frac{\dd}{\dd t}\COT(g_t)
 \le\int\frac{|j|^2}{g_x}\dd\pi_x
 +\int\xi_{g_x}\cdot\left[
    \partial_tj-\nabla_x^*\!\left(\frac{j\otimes j}{g_x}\right)
   \right]\dd\pi_x.
\]
Substituting~\eqref{eq:moment} and integrating by parts as justified in Lemma~\ref{lem:spatial-bv} gives
\begin{equation}\label{eq:spatial-bound}
 \begin{aligned}
 \frac{\dd}{\dd t}\COT(g_t)
 &\le-\gamma\COT(g)\\
 &\quad+\int g_x\bigl[|j/g_x|^2+\Sigma:(\Id-J)-B\nabla U\cdot\xi_{g_x}\bigr]\dd\pi_x,
 \end{aligned}
\end{equation}
where the singular contribution is nonpositive and the colon denotes the Frobenius inner product.

The centered conditional velocity law has covariance $\Sigma$ and relative entropy $k-|j/g_x|^2/2$. Apply entropy duality to this law with test function $v\cdot(\Id-J)v/4$. Since $J\succeq0$, its Gaussian exponential moment is $\det((\Id+J)/2)^{-1/2}$, as in~\cite{LuUnderdampedEntropy}*{Lemma 5.1}. Hence
\begin{equation*}
 \begin{aligned}
 |j/g_x|^2+\Sigma:(\Id-J)
 &\le4k-|j/g_x|^2-2\log\det\!\left(\frac{\Id+J}{2}\right)\\
 &\le4k-2\log\det\!\left(\frac{\Id+J}{2}\right).
 \end{aligned}
\end{equation*}
Convexity gives $\nabla U\cdot\xi_{g_x}\ge U-U(T_{g_x})$. Since $B\ge0$, the integrand in~\eqref{eq:spatial-bound} is therefore at most
\begin{equation*}
 4k-B[U-U(T_{g_x})]-2\log\det\!\left(\frac{\Id+J}{2}\right).
\end{equation*}
The Monge--Amp\`ere identity~\cite{McCann1997}
\begin{equation}\label{eq:MA}
 \log g_x=U(x)-U(T_{g_x}(x))+\log\det J
\end{equation}
holds $g_x\pi_x$-almost everywhere, with $\det J>0$ there; see Lemma~\ref{lem:jacobian-logs}. By~\eqref{eq:logdet-term} and~\eqref{eq:MA}, the last display equals
\begin{equation*}
 4k-\log g_x+\mathfrak d[U-U(T_{g_x})]-D_J.
\end{equation*}
Integrating against $g_x\pi_x$ in~\eqref{eq:spatial-bound} proves~\eqref{eq:current-full}.
\end{proof}

\subsection{Control of the \texorpdfstring{defect term}{defect term}}\label{subsec:force}
We bound the last integral in~\eqref{eq:current-full} using $D_J$, up to entropy terms and an exponentially small remainder. The coefficient of conditional velocity entropy is linear in the cutoff level and independent of the condition number.

We estimate the last integral separately on $H_r$ and its complement, where
\begin{equation}\label{eq:good-set}
 H_r=\{x:|U(x)-U(T_{g_x}(x))|\le r\}\,.
\end{equation}
The choice of~$r$ balances a velocity-entropy coefficient linear in~$r$ against an exponentially small remainder.

\begin{lemma}[A one-sided Gaussian exponential moment]\label{lem:gaussian-moment}
For every unit vector~$n$ and $z<1/4$,
\begin{equation}\label{eq:bounce-mgf}
 \begin{aligned}
 \psi(z)&:=\log\int e^{z[2(v\cdot n)_+^2-1]}\dd\varphi_d(v)\\
 &=-z+\log\frac{1+(1-4z)^{-1/2}}2
 \le\frac{5z^2}{2(1-4z_+)}\,.
 \end{aligned}
\end{equation}
\end{lemma}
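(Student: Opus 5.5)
The plan is to reduce everything to the one-dimensional law of \(s=v\cdot n\), which is \(N(0,1)\) under \(\varphi_d\) by rotation invariance; the orthogonal components integrate out. Then
\[
 \int e^{z[2(v\cdot n)_+^2-1]}\dd\varphi_d(v)=e^{-z}\Bigl(\tfrac12+\tfrac12\,\E\bigl[e^{2zs^2}\bigr]\Bigr),
\]
splitting according to the sign of \(s\). On \(\{s\le0\}\), of probability \(1/2\), the exponent is \(-z\). On \(\{s>0\}\), the identity \(\E[e^{2zs^2}\mid s>0]=\E[e^{2zs^2}]\) follows from symmetry. The standard Gaussian formula \(\E[e^{2zs^2}]=(1-4z)^{-1/2}\), valid for \(z<1/4\), then gives the closed form \(\psi(z)=-z+\log\frac{1+(1-4z)^{-1/2}}{2}\).

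For the inequality, set \(u=(1-4z)^{-1/2}\) and write \(\psi(z)=-z+\log(1+(u-1)/2)\). Then \(\psi(0)=0\), and \(\psi'(0)=-1+\frac12\cdot\frac{d}{dz}u|_{0}=-1+\frac12\cdot 2=0\), which reflects that \(2(v\cdot n)_+^2-1\) has mean zero. A second-order computation gives \(\psi''(0)\): \(u''(0)=12\), so \(\frac{d^2}{dz^2}\log\frac{1+u}{2}\big|_0=\frac{u''}{2}-\frac{(u')^2}{4}=6-1=5\). Hence \(\psi(z)\approx 5z^2/2\), matching the claimed constant.

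I will split into two cases.

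\textbf{Case \(z\le0\).} Here the target is \(\psi(z)\le 5z^2/2\). The cleanest route is to note that \(X=2s_+^2-1\) satisfies \(X\ge-1\) and has mean zero. Then \(\psi(z)\le \frac{z^2}{2}\E X^2\) for \(z\le0\), using \(e^{y}\le1+y+y^2/2\) for \(y=zX\le |z|\), with care: \(y\) can be positive (up to \(|z|\)) when \(X=-1\). So I would instead verify directly that \(h(z)=\frac52z^2-\psi(z)\) has \(h(0)=h'(0)=0\) and \(h''\ge0\) on \((-\infty,0]\), i.e.\ \(\psi''(z)\le5\) there. Since \(\psi''=\operatorname{Var}_{z}(X)\) under the tilted law, and tilting toward \(z<0\) shrinks the weight on large \(s_+\), a monotonicity argument for \(\psi''\) should work. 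Alternatively, I would compute \(\psi''\) explicitly in terms of \(u\in(0,1]\) and check that it is a rational expression in \(u\) bounded by \(5\).

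\textbf{Case \(0<z<1/4\).} Here I would show \(\psi''(z)\le 5/(1-4z)^{k}\) for a suitable \(k\), and integrate twice. A cleaner alternative is to prove \(\psi(z)(1-4z)\le\frac52z^2\) directly as an inequality in \(u\). With \(z=(1-u^{-2})/4\), this becomes an elementary inequality in \(u\ge1\) involving \(\log\frac{1+u}{2}\). I would handle it by bounding \(\log\frac{1+u}{2}\le\frac{u-1}{2}-\frac{(u-1)^2}{8}+\frac{(u-1)^3}{24}\) or via \(\log y\le y-1\) combined with the \(-z\) correction, then clearing denominators to a polynomial inequality.

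I expect this positive-\(z\) polynomial verification, keeping the sharp constant \(5/2\), to be the main obstacle. The plan is to reduce it to showing a single explicit function of \(u\) is monotone, by checking its derivative sign.
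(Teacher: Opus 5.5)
Your plan follows the paper's proof: the closed form comes from symmetry, and the inequality from a bound on \(\psi''\) integrated twice from \(\psi(0)=\psi'(0)=0\), separately for \(z\le0\) and \(0\le z<1/4\). Your closed form and \(\psi''(0)=5\) are correct. The gap is that you leave the decisive estimate for \(0<z<1/4\) unproved, without fixing \(k\). It is short in your variable \(u=(1-4z)^{-1/2}\). Since \(du/dz=2u^3\),
\[
 \psi'(z)=-1+\frac{2u^3}{1+u},\qquad
 \psi''(z)=\frac{4u^5(3+2u)}{(1+u)^2}.
\]
Its logarithmic derivative in \(u\), namely \(5/u+2/(3+2u)-2/(1+u)\), is positive, so \(\psi''\) increases with \(z\); this confirms your tilted-variance heuristic. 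Hence \(\psi''\le5\) for \(z\le0\), i.e.\ \(u\le1\). For \(z\ge0\) take \(k=3\). Twice integrating \(5(1-4t)^{-3}\) from \(0\) gives exactly \(5z^2/(2(1-4z))\). Moreover, \(\psi''\le5u^6=5(1-4z)^{-3}\) is equivalent to \(5u^3+10u^2-3u-12=(u-1)(5u^2+15u+12)\ge0\), which holds for \(u\ge1\).

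The polynomial routes you list for the positive case do not close it on their own. The cubic Taylor bound for \(\log\frac{1+u}2\) grows like \(u^3\), while the right-hand side of the target grows like \(u^2\), so it fails for large \(u\) (roughly \(u>7\)). The bound \(\log y\le y-1\) gives \(\psi(z)\le3z^2+O(z^3)\), which loses the sharp constant near \(z=0\). Each works only on part of the range.

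Your last idea, monotonicity of a single function of \(u\), does work and is a slightly shorter alternative to the paper's route. Set
\[
 F(u)=\frac{5(u^2-1)^2}{32u^2}+\frac{u^2-1}{4u^2}-\log\frac{1+u}2 .
\]
Then \(F(1)=0\) and \(F'(u)=(u-1)^2(5u^3+15u^2+9u+3)/(16u^3(1+u))\ge0\). Thus \(F\ge0\) for \(u\ge1\), which is exactly \(\psi(z)\le 5z^2/(2(1-4z))\) for \(0\le z<1/4\), using one derivative instead of two.
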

\begin{proof}
The formula follows by Gaussian symmetry and direct integration. Direct differentiation gives
\begin{equation*}
 \psi''(z)\le\begin{cases}
 5,&z\le0,\\
 5(1-4z)^{-3},&0\le z<1/4.
 \end{cases}
\end{equation*}
Integrating twice from $\psi(0)=\psi'(0)=0$ proves~\eqref{eq:bounce-mgf}.
\end{proof}

\begin{lemma}[Defect-term estimate on $H_r$]\label{lem:central}
Let $g$ be a probability density relative to~$\pi$, with $0\le g\le M$. For $r\ge d+1$ and $g_x\pi_x$-almost every point in~$H_r$,
\begin{equation}\label{eq:central-pointwise}
 \begin{aligned}
 &\mathfrak d[U-U(T_{g_x})]-D_J\\
 &\qquad\le44r k
 +\frac12\left(\log g_x-1+\frac1{g_x}\right)-\frac12D_J\,.
 \end{aligned}
\end{equation}
Consequently, with the integral understood in the extended sense,
\begin{equation}\label{eq:central}
 \int_{H_r}g_x\bigl\{\mathfrak d[U-U(T_{g_x})]-D_J\bigr\}\dd\pi_x
 \le44r\Ent_v(g)+\frac12\Ent_x(g_x)\,.
\end{equation}
\end{lemma}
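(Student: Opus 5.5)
The plan is to prove the pointwise bound \eqref{eq:central-pointwise} by entropy duality in the velocity variable, and then to integrate it. Fix a point \(x\in H_r\) with \(g_x(x)>0\), and write \(\Delta=U(x)-U(T_{g_x}(x))\), so that \(|\Delta|\le r\). By \eqref{eq:one-sided-moment},
\[
 \mathfrak d\,\Delta=(-\Delta)(B-1)=(-\Delta)\int\bigl[2(v\cdot n)_+^2-1\bigr]g_{v\mid x}\dd\varphi_d .
\]
For a parameter \(s>0\), apply the Donsker--Varadhan duality \(\int F g_{v\mid x}\dd\varphi_d\le k+\log\int e^{F}\dd\varphi_d\) with \(F=-s\Delta[2(v\cdot n)_+^2-1]\). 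Together with Lemma~\ref{lem:gaussian-moment} this gives
\[
 \mathfrak d\,\Delta\le\frac{k+\psi(-s\Delta)}{s}
 \le\frac ks+\frac{5s\Delta^2}{2(1-4s|\Delta|)}.
\]
I will take \(s=1/(44r)\). Then \(4s|\Delta|\le1/11\), so
\[
 \mathfrak d\,\Delta\le44rk+c_0\,\frac{\Delta^2}{r},\qquad c_0=\frac{55}{2\cdot 40\cdot 44}\cdot 11\approx\tfrac1{16},
\]
with the precise value of \(c_0\) to be tracked later. The claim then reduces to the deterministic inequality
\[
 c_0\frac{\Delta^2}{r}\le\frac12\Bigl(\log g_x-1+\frac1{g_x}\Bigr)+\frac12D_J
 \qquad\text{whenever }|\Delta|\le r,\ r\ge d+1 .
\]

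To prove this, I will use the Monge--Amp\`ere identity \eqref{eq:MA}, which gives \(\Delta=\log g_x-\ell\) with \(\ell=\log\det J=\sum_i u_i\), where \(u_i=\log\lambda_i\) are the logarithms of the eigenvalues of \(J\). On the transport side, \(D_J=\sum_i2\log\cosh(u_i/2)\). Since \(2\log\cosh(u/2)\ge c_1\min(u^2,|u|)\), we get
\[
 D_J\ge c_1\sum_i\min(u_i^2,|u_i|)\ge c_1\min\!\Bigl(\frac{\ell^2}{d},\,|\ell|\Bigr)\ \text{(up to constants)},
\]
by Cauchy--Schwarz on the coordinates where \(|u_i|\le1\) and by the triangle inequality elsewhere. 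On the density side, with \(a=\log g_x\), the function \(h(a)=a-1+e^{-a}\) satisfies \(h(a)\ge c_2\min(a^2,|a|)\).

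Next I bound \(\Delta^2/r\). On one hand, \(\Delta^2/r\le|\Delta|\le|a|+|\ell|\). On the other hand, \(\Delta^2\le2a^2+2\ell^2\) and \(r\ge d+1\). Combining these: in the regimes where \(|a|\) or \(|\ell|\) is at most \(1\), respectively at most \(\sqrt d\), the quadratic branch is used, and \(\ell^2/r\le\ell^2/d\); otherwise the linear branch is used. In both cases
\[
 \frac{\Delta^2}{r}\le C\Bigl(\min(a^2,|a|)+\min\!\bigl(\tfrac{\ell^2}{d},|\ell|\bigr)\Bigr).
\]
The step I expect to be the main obstacle is making this case analysis close with the specific constant \(44\). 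There \(c_0C\) must be at most \(\tfrac12\min(c_2,c_1)\). If the constants do not close, I would adjust \(s=1/(\beta r)\) and check that \(\beta=44\) suffices. This is the only place where \(r\ge d+1\) is used: it converts the dimension-dependent quadratic lower bound \(\ell^2/d\) on \(D_J\) into control of \(\ell^2/r\).

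Finally, I obtain \eqref{eq:central} by integrating \eqref{eq:central-pointwise} against \(g_x\pi_x\) over \(H_r\) and dropping the term \(-\tfrac12D_J\le0\). The velocity term gives \(\int_{H_r}g_xk\dd\pi_x\le\Ent_v(g)\). The density term is bounded using that \(h\ge0\), so the integral over \(H_r\) is at most the integral over \(\{g_x>0\}\):
\[
 \int_{\{g_x>0\}}g_x\Bigl(\log g_x-1+\frac1{g_x}\Bigr)\dd\pi_x
 =\Ent_x(g_x)+\pi_x(g_x>0)-1\le\Ent_x(g_x).
\]
The integral on the left of \eqref{eq:central} is taken in the extended sense because its positive part is integrable.
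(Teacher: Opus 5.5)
Your duality step is the paper's, and it gives exactly $\mathfrak d\,\Delta\le44rk+\Delta^2/(16r)$. For $|z|\le1/44$, Lemma~\ref{lem:gaussian-moment} yields $\psi(z)\le\tfrac{11}{4}z^2$, so $c_0=1/16$; your displayed formula for $c_0$ does not evaluate to this. Your final integration step is also fine.

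The gap is the deterministic inequality $\frac{\Delta^2}{16r}\le\frac12\bigl(\log g_x-1+1/g_x\bigr)+\frac12D_J$. This is the entire remaining content of the lemma, and you leave it as a case analysis with unspecified $c_1,c_2,C$. Your fallback of replacing $44$ by some other $\beta$ would prove a different lemma, since $44$ is exactly what enters $A_r=4+44r$.

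The constants cannot be waved through, because the margin is thin. The best constant in your reduction $\Delta^2/r\le C\bigl(\min(a^2,|a|)+\min(\ell^2/d,|\ell|)\bigr)$ is $4(d+1)/(3d+4)$, which tends to $4/3$; take $r=d+1$, $a=1+d/2$, $\ell=-d/2$. You therefore need $D_J\ge\tfrac16\min(\ell^2/d,|\ell|)$. Your chain gives at best about $2\log\cosh(\tfrac12)\cdot\tfrac34\approx0.180$. The first factor is the best constant in $2\log\cosh(u/2)\ge c\min(u^2,|u|)$. The second is the worst-case loss in passing from $\sum_i\min(u_i^2,|u_i|)$ to $\min(\ell^2/d,|\ell|)$.

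So your route can close, but only if every link is essentially sharp. The split $\Delta^2\le2a^2+2\ell^2$ you propose is already too lossy. At $g_x=1$ it loses a factor $2$. But at $g_x=1$, $J$ near $e\,\Id$, $r=d+1$, the inequality has only a factor of about $16\log\cosh(\tfrac12)\approx1.92$ of room for large $d$.

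The idea you are missing, which the paper uses, removes all case distinctions and constant chasing. Since $|\Delta|/(2r)\le\tfrac12$ and $\log\cosh z\ge z^2/4$ for $|z|\le\tfrac12$, one has $\frac{\Delta^2}{16r}\le r\log\cosh\frac{\Delta}{2r}$. Now write $\frac{\Delta}{2r}$ as a convex combination: weight $1/r$ at each of $\tfrac12\log g_x,-\tfrac12\log\lambda_1,\dots,-\tfrac12\log\lambda_d$, and weight $1-(d+1)/r$ at $0$. These weights are nonnegative precisely because $r\ge d+1$.

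Apply Jensen's inequality to the convex even function $\log\cosh$, and use the exact identity $\sum_i\log\cosh(\tfrac12\log\lambda_i)=\tfrac12D_J$. This gives $r\log\cosh\frac{\Delta}{2r}\le\log\cosh(\tfrac12\log g_x)+\tfrac12D_J$. Finally, $\log\cosh(\tfrac12\log g_x)=\tfrac12\log g_x+\log\frac{1+1/g_x}{2}\le\tfrac12(\log g_x-1+1/g_x)$.

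This treats the quadratic and linear regimes of $g_x$ and of every eigenvalue at once. That is why the constant $44$ closes with no tuning.
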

\begin{proof}
Fix a point in~$H_r$ where $g_x>0$, $J\succ0$, and~\eqref{eq:MA} holds. Apply entropy duality to the probability measures $g_{v\mid x}\varphi_d$ and~$\varphi_d$ with test function
\begin{equation*}
 \frac{U-U(T_{g_x})}{44r}\bigl[1-2(v\cdot n)_+^2\bigr]\,.
\end{equation*}
In the Gaussian exponential-moment bound~\eqref{eq:bounce-mgf}, take
\begin{equation*}
 z=-\frac{U-U(T_{g_x})}{44r}\,.
\end{equation*}
Since $|U-U(T_{g_x})|\le r$, we have $1-4z_+\ge10/11$. Therefore
\begin{equation*}
 \begin{aligned}
 \frac{\mathfrak d[U-U(T_{g_x})]}{44r}
 &\le k+\log\int e^{z[2(v\cdot n)_+^2-1]}\dd\varphi_d(v)\\
 &\le k+\frac{[U-U(T_{g_x})]^2}{704r^2}\,.
 \end{aligned}
\end{equation*}
Multiplying by~$44r$ gives
\begin{equation}\label{eq:force-duality}
 \mathfrak d[U-U(T_{g_x})]
 \le44r k+\frac{[U-U(T_{g_x})]^2}{16r}\,.
\end{equation}

We next bound the square using~$D_J$. For $|z|\le1/2$, we have $\log\cosh z\ge z^2/4$. Thus
\begin{equation*}
 \frac{[U-U(T_{g_x})]^2}{16r}
 \le r\log\cosh\!\left(\frac{U-U(T_{g_x})}{2r}\right)\,.
\end{equation*}
Let $\lambda_1,\ldots,\lambda_d$ be the eigenvalues of~$J$. The Monge--Amp\`ere identity and the definition of~$D_J$ give
\begin{align*}
 U-U(T_{g_x})&=\log g_x-\sum_{i=1}^d\log\lambda_i,\\
 \frac12D_J&=\sum_{i=1}^d\log\cosh\!\left(\frac12\log\lambda_i\right)\,.
\end{align*}
Apply Jensen's inequality to the convex function $\log\cosh$ with weight $1/r$ at each of $\frac12\log g_x,-\frac12\log\lambda_1,\ldots,-\frac12\log\lambda_d$, and weight $1-(d+1)/r$ at zero. The weights are nonnegative because $r\ge d+1$. Evenness of $\log\cosh$ then gives
\begin{equation}\label{eq:force-absorption}
 \begin{aligned}
 \frac{[U-U(T_{g_x})]^2}{16r}
 &\le r\log\cosh\!\left(\frac{U-U(T_{g_x})}{2r}\right)\\
 &\le\log\cosh\!\left(\frac12\log g_x\right)+\frac12D_J\,.
 \end{aligned}
\end{equation}

To control the remaining spatial term, use $\log u\le u-1$ for $u>0$:
\begin{equation*}
 \begin{aligned}
 \log\cosh\!\left(\frac12\log g_x\right)
 &=\frac12\log g_x+\log\frac{1+1/g_x}{2}\\
 &\le\frac12\left(\log g_x-1+\frac1{g_x}\right)\,.
 \end{aligned}
\end{equation*}
Substitute this inequality and~\eqref{eq:force-absorption} into~\eqref{eq:force-duality}, and subtract~$D_J$. This proves~\eqref{eq:central-pointwise}.

For the integrated estimate, the nonnegative spatial entropy density satisfies
\begin{equation*}
 g_x\log g_x-g_x+1\ge0,\qquad
 \int(g_x\log g_x-g_x+1)\dd\pi_x=\Ent_x(g_x)\,.
\end{equation*}
At points where $g_x>0$, this density equals $g_x(\log g_x-1+1/g_x)$. The set $\{g_x=0\}$ has zero $g_x\pi_x$-mass. Consequently,
\begin{equation*}
 \int_{H_r}g_x\left(\log g_x-1+\frac1{g_x}\right)\dd\pi_x
 \le\Ent_x(g_x)\,.
\end{equation*}
On~$H_r$, the defect term is integrable and $D_J\ge0$, so the integral is well defined. Discarding~$-D_J/2$ in~\eqref{eq:central-pointwise} and using $\int_{H_r}g_xk\dd\pi_x\le\Ent_v(g)$ proves~\eqref{eq:central}.
\end{proof}

It remains to bound the defect term outside~$H_r$. We first record tail and moment bounds for the potential.

\begin{lemma}[Tail and truncated second-moment bounds for the potential]\label{lem:potential}
Let $U$ be a convex potential with minimum zero and finite partition function. Then, for every $s\ge0$,
\begin{align*}
 \pi_x(U>s)&\le2^d e^{-s/2},\\
 \int U^2\mathbf1_{\{U>s\}}\dd\pi_x
 &\le2^d(s^2+4s+8)e^{-s/2}\,.
\end{align*}
\end{lemma}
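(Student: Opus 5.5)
The plan is to reduce both bounds to a single exponential-moment estimate. Write $\nu=\pi_x$. By Markov's inequality,
\[
 \pi_x(U>s)\le e^{-s/2}\int e^{U/2}\dd\pi_x=e^{-s/2}\,\frac{\int e^{-U/2}\dd x}{\int e^{-U}\dd x},
\]
so the tail bound follows once we prove the ratio bound $\int e^{-U/2}\le 2^d\int e^{-U}$. For this I would use convexity together with $U(0)=0=\min U$. Convexity gives $U(x/2)\le U(x)/2$, so $U(x)/2\ge U(x/2)$ and hence $e^{-U(x)/2}\le e^{-U(x/2)}$. Integrating and substituting $y=x/2$ gives
\[
 \int e^{-U/2}\dd x\le\int e^{-U(x/2)}\dd x=2^d\int e^{-U(y)}\dd y.
\]
This settles the first inequality. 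Finiteness of $\int e^{-U/2}$ is then automatic from finiteness of $Z$.

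For the truncated second moment, I would use the layer-cake formula for the nonnegative variable $U$ on $\{U>s\}$:
\[
 \int U^2\mathbf 1_{\{U>s\}}\dd\pi_x=s^2\pi_x(U>s)+\int_s^\infty 2u\,\pi_x(U>u)\dd u .
\]
Inserting the tail bound gives
\[
 \le 2^d\Bigl[s^2e^{-s/2}+\int_s^\infty 2u e^{-u/2}\dd u\Bigr].
\]
A direct computation gives $\int_s^\infty 2ue^{-u/2}\dd u=(4s+8)e^{-s/2}$, which is exactly $2^d(s^2+4s+8)e^{-s/2}$.

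There is no real obstacle. The only point needing care is the midpoint convexity step, where the normalization $\min U=U(0)=0$ is used. The paper has placed the minimizer at the origin, so this holds. In general one would translate so the minimizer is at the origin; if the minimum were attained elsewhere, the scaling argument would still work after that translation.
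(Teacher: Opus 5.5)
Your proof is correct and follows essentially the same route as the paper: Markov's inequality applied to $e^{U/2}$, the midpoint-convexity bound $U(x/2)\le U(x)/2$ (equivalently $U(2y)\ge2U(y)$) combined with the scaling change of variables to obtain $\int e^{U/2}\dd\pi_x\le2^d$, and then the layer-cake formula with $\int_s^\infty2ue^{-u/2}\dd u=(4s+8)e^{-s/2}$. Your remark about first translating the minimizer to the origin matches the paper's first step.
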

\begin{proof}
Translate a minimizer to the origin, so that $U(0)=0$. Convexity then gives $U(2y)\ge2U(y)$, and the change of variables $x=2y$ yields
\begin{equation*}
 \int e^{U/2}\dd\pi_x
 =\frac1Z\int e^{-U(x)/2}\dd x
 =\frac{2^d}{Z}\int e^{-U(2y)/2}\dd y
 \le2^d\,.
\end{equation*}
Markov's inequality gives the tail bound. Integrating it gives
 \begin{align*}
 \int U^2\mathbf1_{\{U>s\}}\dd\pi_x
 &=s^2\pi_x(U>s)+\int_s^\infty2t\pi_x(U>t)\dd t\\
 &\le2^d(s^2+4s+8)e^{-s/2}\,. \qedhere
 \end{align*}
\end{proof}

These bounds make the defect term outside~$H_r$ exponentially small in the cutoff level.

\begin{lemma}[Defect-term estimate on $H_r^c$]\label{lem:tail}
Let $g$ be a probability density relative to~$\pi$, with $0\le g\le M$. For every $r\ge d+1$,
\begin{equation}\label{eq:tail}
 \int_{H_r^c}g_x|U-U(T_{g_x})|\,|\mathfrak d|\dd\pi_x
 \le5M2^{d/2}(r+3)e^{-r/4}=\eta_r\,.
\end{equation}
\end{lemma}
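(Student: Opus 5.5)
The idea is to use $|\mathfrak d|\le 1+B$ and the density bound $g\le M$ to control $\mathfrak d$ in an $L^2$ sense. The potential difference is then split into a part depending on $x$ and a part depending on $T_{g_x}(x)$. The $T_{g_x}$ part is handled through the pushforward identity $(T_{g_x})_\#(g_x\pi_x)=\pi_x$. Both parts then fall under the tail bound of Lemma~\ref{lem:potential} with $s=r$, and a Cauchy--Schwarz step turns $2^d e^{-r/2}$ into $2^{d/2}e^{-r/4}$.

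\emph{Step 1: pointwise moment bounds.} Since $U\ge0$, we have $|U-U(T_{g_x})|\le\max\{U,U(T_{g_x})\}$. On $H_r^c$ this maximum exceeds $r$, so
\[
 |U-U(T_{g_x})|\mathbf 1_{H_r^c}\le U\mathbf 1_{\{U>r\}}+U(T_{g_x})\mathbf 1_{\{U(T_{g_x})>r\}}.
\]
Next, by Cauchy--Schwarz in $v$ (using $\int g_{v\mid x}\dd\varphi_d=1$),
\[
 g_xB^2\le 4\int(v\cdot n)_+^4\,g\dd\varphi_d .
\]
With $g\le M$ and $\int(v\cdot n)_+^4\dd\varphi_d=3/2$, this gives $g_xB^2\le 6M$. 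Since also $g_x\le M$, we get $g_x(1+B)^2\le 2g_x+2g_xB^2\le 14M$ pointwise. Hence
\[
 \int g_x^2(1+B)^2\dd\pi_x\le 14M\int g_x\dd\pi_x= 14M,
\]
and likewise $\int g_x(1+B)^2\dd\pi_x\le 14M$.

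\emph{Step 2: the two tail pieces.} For the first piece, write $g_x|\mathfrak d|\,U\mathbf 1_{\{U>r\}}\le g_x(1+B)\cdot U\mathbf 1_{\{U>r\}}$ and apply Cauchy--Schwarz in $L^2(\pi_x)$. This bounds the piece by $(14M)^{1/2}\bigl(\int U^2\mathbf 1_{\{U>r\}}\dd\pi_x\bigr)^{1/2}$.

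For the second piece, split the weight as $g_x(1+B)\,U(T_{g_x})\mathbf 1 = [\sqrt{g_x}(1+B)]\cdot[\sqrt{g_x}\,U(T_{g_x})\mathbf 1]$ and apply Cauchy--Schwarz. The factor with $U(T_{g_x})$ becomes
\[
 \int g_x\,U(T_{g_x})^2\mathbf 1_{\{U(T_{g_x})>r\}}\dd\pi_x=\int U^2\mathbf 1_{\{U>r\}}\dd\pi_x,
\]
because $T_{g_x}$ pushes $g_x\pi_x$ forward to $\pi_x$.

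In both pieces, Lemma~\ref{lem:potential} with $s=r$ gives $\bigl(\int U^2\mathbf 1_{\{U>r\}}\dd\pi_x\bigr)^{1/2}\le 2^{d/2}(r^2+4r+8)^{1/2}e^{-r/4}\le 2^{d/2}(r+3)e^{-r/4}$.

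\emph{Step 3: constants.} Summing the two pieces gives a bound $2\sqrt{14M}\,2^{d/2}(r+3)e^{-r/4}$. Using $M\ge1$ (forced by $\int g\dd\pi=1$) gives $\sqrt M\le M$, but $2\sqrt{14}\approx7.5$ exceeds the stated $5$. To match $\eta_r$ exactly I would sharpen Step 1. One option is the exact bound $g_x B\le M$, which follows from $2\int(v\cdot n)_+^2\dd\varphi_d=1$, combined with $g_x\le M$ and $|\mathfrak d|\le\max(1,B)$ rather than $1+B$. Another is to use a weighted Cauchy--Schwarz splitting. I expect that one of these brings the prefactor down to $5M$.

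The conceptual point, and the main obstacle, is Step 2. The term involving $U(T_{g_x})$ cannot be handled by the crude bound $g_x|\mathfrak d|\le 2M$, because $\pi_x$ is not the pushforward measure. One must keep a factor $\sqrt{g_x}$ with $U(T_{g_x})$ so that the pushforward identity applies. The remaining factor then requires an $L^2$ bound on $\sqrt{g_x}(1+B)$, which is where $g\le M$ enters through the fourth velocity moment.
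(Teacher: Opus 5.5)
Your argument follows the paper's route. It uses Cauchy--Schwarz with the weight $g_x$, the pushforward identity for the $U(T_{g_x})$ tail, the bound $g_xB^2\le 6M$ from $g\le M$ and the fourth Gaussian moment, and Lemma~\ref{lem:potential} with $s=r$. Every step you carry out is correct. The gap is in Step~3: you prove the estimate with prefactor $2\sqrt{14}\approx7.5$, not the $5$ that defines $\eta_r$, and you only expect a fix. Two of the variants you hint at do not reach $5$ in your two-piece scheme:
\begin{itemize}
\item $|\mathfrak d|\le1+B$ together with $g_xB\le M$ gives $g_x(1+B)^2\le 9M$, hence prefactor $6$;
\item $|\mathfrak d|\le\max(1,B)$ with $\max(1,B)^2\le 1+B^2$ gives $7M$, hence prefactor $2\sqrt7>5$.
\end{itemize}
What does work is $|\mathfrak d|^2=(1-B)^2\le\max(1,B^2)$. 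This gives $g_x|\mathfrak d|^2\le\max(g_x,g_xB^2)\le 6M$ pointwise, so both $\int g_x^2|\mathfrak d|^2\dd\pi_x$ and $\int g_x|\mathfrak d|^2\dd\pi_x$ are at most $6M$. Your two pieces then sum to $2\sqrt6\,M\,2^{d/2}(r+3)e^{-r/4}$, and $2\sqrt6<5$.

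The paper avoids the loss from adding two square roots. It bounds the square of the potential difference rather than the difference itself. On $\{U-U(T_{g_x})>r\}$ the square is at most $U^2\mathbf 1_{\{U>r\}}$, and on $\{U-U(T_{g_x})<-r\}$ it is at most $U(T_{g_x})^2\mathbf 1_{\{U(T_{g_x})>r\}}$. Using $g_x\le M$ and the pushforward identity, this gives $\int_{H_r^c}g_x[U-U(T_{g_x})]^2\dd\pi_x\le(M+1)\int U^2\mathbf 1_{\{U>r\}}\dd\pi_x$. A single Cauchy--Schwarz against $g_x\pi_x$, with the integrated bound $\int g_x(1+B)^2\dd\pi_x\le 1+2M+6M\le 9M$, then yields prefactor $3\sqrt2<5$. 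The exact constant matters only for the lemma as stated; the proof of Theorem~\ref{thm:master} uses only the form $M2^{d/2}(r+3)e^{-r/4}$ up to a universal factor.
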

\begin{proof}
Since $U\ge0$, the events $U-U(T_{g_x})>r$ and $U-U(T_{g_x})<-r$ give, respectively,
\begin{equation*}
 [U-U(T_{g_x})]^2\le U^2\mathbf1_{\{U>r\}},\qquad
 [U-U(T_{g_x})]^2\le U(T_{g_x})^2\mathbf1_{\{U(T_{g_x})>r\}}\,.
\end{equation*}
Hence $g_x\le M$ and $(T_{g_x})_\#(g_x\pi_x)=\pi_x$ imply
\begin{equation*}
 \int_{H_r^c}g_x[U-U(T_{g_x})]^2\dd\pi_x
 \le(M+1)\int U^2\mathbf1_{\{U>r\}}\dd\pi_x\,.
\end{equation*}
Conditional Jensen and $g\le M$ give
\begin{equation*}
 \int g_xB\dd\pi_x\le M,\qquad
 \int g_xB^2\dd\pi_x\le6M\,.
\end{equation*}
Since $M\ge1$, it follows that $\int g_x(1+B)^2\dd\pi_x\le9M$. Cauchy--Schwarz therefore gives
\begin{equation*}
 \begin{aligned}
 &\int_{H_r^c}g_x|U-U(T_{g_x})|\,|\mathfrak d|\dd\pi_x\\
 &\quad\le3\sqrt{M(M+1)\int U^2\mathbf1_{\{U>r\}}\dd\pi_x}\,.
 \end{aligned}
\end{equation*}
Use Lemma~\ref{lem:potential}, $M+1\le2M$, and $\sqrt{r^2+4r+8}\le r+3$ to obtain~\eqref{eq:tail}.
\end{proof}

\begin{proof}[Proof of Proposition~\ref{prop:weak-current}]
Entropy dissipation requires no smoothing. The semigroup of transport and bouncing preserves $\pi$ and contracts relative entropy. Conditioning on the last refreshment in an interval of length $h>0$ and using convexity gives
\begin{equation*}
 \Ent_\pi(g_{t+h})
 \le e^{-\gamma h}\Ent_\pi(g_t)
   +\gamma\int_0^h e^{-\gamma(h-u)}\Ent_x(g_{x,t+u})\dd u.
\end{equation*}
Taking difference quotients in distributions proves~\eqref{eq:entropy-diss}.

For the current inequality, first assume the regularity hypotheses of Proposition~\ref{prop:current}. Combining~\eqref{eq:current-full} with Lemmas~\ref{lem:central} and~\ref{lem:tail}, and discarding the nonpositive tail contribution of~$D_J$, gives
\begin{equation*}
 \frac{\dd}{\dd t}\COT(g_t)
 \le-\frac12\Ent_x(g_{x,t})+(4+44r)\Ent_v(g_t)-\gamma\COT(g_t)+\eta_r\,.
\end{equation*}
This is~\eqref{eq:closed-current} with~\eqref{eq:tail-constant}. Lemma~\ref{lem:approximation} extends the current inequality to bounded initial densities and gives continuity of the entropy and current.
\end{proof}

\section{Entropy decay and query complexity}\label{sec:closure}
We prove the entropy-decay estimate and the resulting convergence-time and gradient-query bounds from a Gaussian cold start.

\begin{proof}[Proof of Theorem~\ref{thm:master}]
Fix \(\gamma>0\) and \(0<\delta\le1/2\). We first work with an arbitrary cutoff \(r\ge d+1\). For \(\epsilon>0\), define the modified entropy
\[
 \mathcal H_\epsilon(g)=\Ent_\pi(g)+\epsilon\COT(g).
\]
Equation~\eqref{eq:current-size} gives
\begin{equation}\label{eq:equiv}
 (1-\epsilon/\sqrt{m})\Ent_\pi(g)
 \le\mathcal H_\epsilon(g)\le
 (1+\epsilon/\sqrt{m})\Ent_\pi(g).
\end{equation}
Young's inequality and~\eqref{eq:current-size} give
\[
 \gamma|\COT(g)|
 \le\frac14\Ent_x(g_x)+\frac{4\gamma^2}{m}\Ent_v(g).
\]
Combining this bound with the two inequalities in Proposition~\ref{prop:weak-current} gives
\begin{equation*}
 \frac{\dd}{\dd t}\mathcal H_\epsilon(g_t)
 \le-\frac\epsilon4\Ent_x(g_{x,t})
   -\bigl[\gamma-\epsilon(A_r+4\gamma^2/m)\bigr]\Ent_v(g_t)
   +\epsilon\eta_r.
\end{equation*}
Choose
\[
 \epsilon=\frac{\gamma}{2(A_r+4\gamma^2/m)},
\]
so that the coefficient in brackets is \(\gamma/2\). This choice also satisfies \(\epsilon/\sqrt{m}\le1/(8\sqrt{A_r})\le1/8\). Since \(\gamma/2\ge\epsilon/4\), the differential inequality and~\eqref{eq:equiv} yield
\[
 \frac{\dd}{\dd t}\mathcal H_\epsilon(g_t)
 \le-\frac\epsilon4\Ent_\pi(g_t)+\epsilon\eta_r
 \le-\frac{2\epsilon}{9}\mathcal H_\epsilon(g_t)+\epsilon\eta_r.
\]
The continuity in Proposition~\ref{prop:weak-current} lets Gr\"onwall's inequality start at \(t=0\). Together with the two sides of~\eqref{eq:equiv} and the value of \(\epsilon\), it gives
\begin{equation}\label{eq:cutoff-decay}
 \Ent_\pi(g_t)
 \le2\exp\!\left[-\frac{\gamma t}{9(A_r+4\gamma^2/m)}\right]
       \Ent_\pi(g_0)+6\eta_r.
\end{equation}

To obtain the prescribed additive error, choose
\[
 r=K\bigl[d+\log M+\log(1/\delta)\bigr],
\]
where \(K\ge2\) is a sufficiently large universal constant. Since \(d\ge1\) and \(M\ge1\), this choice satisfies \(r\ge d+1\). Since \(\log(r+3)\le r/8+C\), the definition~\eqref{eq:tail-constant} gives \(6\eta_r\le\delta\) when \(K\) is large enough. Moreover,
\[
 A_r\le C\bigl[d+\log M+\log(1/\delta)\bigr].
\] Substituting these bounds into~\eqref{eq:cutoff-decay} proves~\eqref{eq:master}.
\end{proof}

\begin{proof}[Proof of Corollary~\ref{cor:cold}]
We apply Theorem~\ref{thm:master} with \(M=\kappa^{d/2}\). The initial density satisfies
\[
 g_0(x,v)=Z\left(\frac L{2\pi}\right)^{d/2}
              e^{U(x)-L|x|^2/2}\le\kappa^{d/2}=M.
\]
The inequality uses \(U(x)\le L|x|^2/2\) and \(Z\le(2\pi/m)^{d/2}\). Hence \(\Ent_\pi(g_0)\le\log M\).

Let \(c\) be the constant in Theorem~\ref{thm:master}, reduced if necessary so that \(c\le1\). Set
\begin{equation}\label{eq:ell-choice}
 \ell=d+\log M+\log(2/\delta).
\end{equation}
We apply the theorem with additive error \(\delta/2\) and choose
\begin{align}
 \gamma&=\sqrt{m\ell},\label{eq:gamma-choice}\\
 T&=\frac2c\sqrt{\frac{\ell}{m}}
       \log\!\left(\frac{4(1+\log M)}{\delta}\right).
       \label{eq:T-choice}
\end{align}
The refreshment rate balances \(\ell\) and \(\gamma^2/m\) in the denominator of the decay rate. With additive error \(\delta/2\), the rate in~\eqref{eq:master} is \(c\sqrt{m}/(2\sqrt\ell)\). The horizon in~\eqref{eq:T-choice} makes the exponentially decaying term at most \(\delta/2\), so \(\KL(\mu_0\mathcal P_T\|\pi)\le\delta\). Since \(\log M=(d/2)\log\kappa\) and \(d\ge1\),
\[
 \ell\le C\bigl[d\log(e\kappa)+\log(1/\delta)\bigr].
\]
Substituting this bound into~\eqref{eq:T-choice} gives~\eqref{eq:time-order}.
\end{proof}

\begin{proof}[Proof of Corollary~\ref{cor:queries}]
Choose the refreshment rate and horizon from the proof of Corollary~\ref{cor:cold}, and use window length \((Ld)^{-1/2}\). We verify the horizon restrictions in the query estimate of Lu and Luo~\cite{LuLuo}*{Proposition 9, (27)}, which reads
\[
 Q_\nabla(T)\le5\sqrt{Ld}\,T,
 \qquad T\ge\max\{L^{-1/2},\gamma/(4L)\}.
\]
The Gaussian cold start and constant refreshment rate satisfy the remaining hypotheses of that estimate.

The horizon in~\eqref{eq:T-choice} satisfies the first restriction because \(c\le1\), \(\ell\ge1\), \(\kappa\ge1\), and \(\log(4(1+\log M)/\delta)\ge\log8\). For the second restriction,~\eqref{eq:gamma-choice} gives
\[
 \frac{T}{\gamma/(4L)}
 =\frac{8\kappa}{c}\log\!\left(\frac{4(1+\log M)}{\delta}\right)>1.
\]
Exactness of windowed thinning and Corollary~\ref{cor:cold} give the entropy guarantee. Substituting~\eqref{eq:time-order} into the query bound proves~\eqref{eq:query}.
\end{proof}

\appendix
\section{Approximation for bounded initial densities}\label{sec:weak}
We justify the logarithmic terms and integration by parts used in Proposition~\ref{prop:current}, and the approximation and continuity statements used in Proposition~\ref{prop:weak-current}. The approximation follows~\cite{LuUnderdampedEntropy}*{Section 7}, with the regularity conditions of~\cite{LiLuHypercontractivity}*{Definition 2.5} verified below.

\begin{lemma}[Jacobian logarithms]\label{lem:jacobian-logs}
Let $g_x$ be a probability density relative to $\pi_x$ with $0\le g_x\le M$. The Alexandrov derivative $J=DT_{g_x}$ is positive definite $g_x\pi_x$-almost everywhere, and~\eqref{eq:MA} holds there. Moreover, $\log\det J\in L^1(g_x\pi_x)$.
\end{lemma}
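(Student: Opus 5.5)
The plan is to use the structure of the Brenier map as the gradient of a convex function, together with Alexandrov's theorem and McCann's Monge--Amp\`ere equation. Write $T_{g_x}=\nabla\phi$ with $\phi$ convex, by Brenier's theorem applied to the absolutely continuous source $g_x\pi_x$ and target $\pi_x$. Alexandrov's theorem gives a symmetric positive semidefinite Hessian $J=D^2\phi$ Lebesgue-almost everywhere. McCann~\cite{McCann1997} then yields, for $g_x\pi_x$-almost every $x$, the Jacobian equation
\[
 g_x(x)e^{-U(x)}=e^{-U(T_{g_x}(x))}\det J(x).
\]
On $\{g_x>0\}$ the left side is positive, so $\det J>0$ there. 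Since $J\succeq0$, this forces $J\succ0$. Taking logarithms gives~\eqref{eq:MA} $g_x\pi_x$-almost everywhere.

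For integrability, write $\log\det J=\log g_x-U+U(T_{g_x})$ and bound each term in $L^1(g_x\pi_x)$.
\begin{itemize}
\item[(i)] The density term: $\int g_x|\log g_x|\dd\pi_x<\infty$. The positive part is at most $\log M$ because $g_x\le M$. The negative part is handled by $g_x|\log g_x|\le e^{-1}$ on $\{g_x\le1\}$, with $\pi_x$ a probability measure.
\item[(ii)] The term in $x$: $\int g_xU\dd\pi_x\le M\int U\dd\pi_x$, which is finite by Lemma~\ref{lem:potential}, the exponential tail bound for $U$ under $\pi_x$.
\item[(iii)] The term through the map: since $(T_{g_x})_\#(g_x\pi_x)=\pi_x$, we get $\int g_xU(T_{g_x})\dd\pi_x=\int U\dd\pi_x<\infty$.
\end{itemize}
Hence $\log\det J\in L^1(g_x\pi_x)$.

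The main obstacle is citing McCann's theorem in the correct form. It requires both measures to be absolutely continuous. It gives the Jacobian equation only on a set of full source measure, where the Alexandrov Hessian exists and $\nabla\phi$ is approximately differentiable. I would state explicitly that the target density $Z^{-1}e^{-U}$ is strictly positive and continuous, and that the source density $g_x e^{-U}/Z$ is bounded. One subtlety is the boundary of the support of $g_x$. This is harmless because we only assert the identity $g_x\pi_x$-almost everywhere, and the set $\{g_x=0\}$ is null for this measure.
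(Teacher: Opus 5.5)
Your proposal is correct and follows the paper's proof essentially step by step. McCann's monotone change-of-variables theorem gives the Jacobian equation $g_x\pi_x$-almost everywhere, and positivity of $g_x$ there upgrades $J\succeq0$ to $J\succ0$ and yields \eqref{eq:MA}; integrability of $\log\det J$ then follows from $\int g_x|\log g_x|\dd\pi_x\le\log M+e^{-1}$, with $U$ and $U(T_{g_x})$ controlled by the density bound, the tail bound for $U$, and the pushforward identity.
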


\begin{proof}
Both transport marginals are absolutely continuous. The monotone change-of-variables theorem~\cite{McCann1997}*{Theorem 4.4 and Remark 4.5} gives
\begin{equation*}
 Z^{-1}e^{-U(T_{g_x}(x))}\det J(x)
 =g_x(x)Z^{-1}e^{-U(x)}\,.
\end{equation*}
Since $g_x>0$ $g_x\pi_x$-almost everywhere, this identity gives $\det J>0$. Together with $J\succeq0$, it proves $J\succ0$ and~\eqref{eq:MA} there. The bound $g_x\le M$ gives $\int g_x|\log g_x|\dd\pi_x\le\log M+e^{-1}$. Both $U$ and $U(T_{g_x})$ are integrable against~$g_x\pi_x$, by the density bound, Gaussian tails, and the pushforward identity. Equation~\eqref{eq:MA} therefore gives the claimed logarithmic integrability.
\end{proof}

\begin{lemma}[Spatial integration by parts]\label{lem:spatial-bv}
Under the hypotheses of Proposition~\ref{prop:current}, the integration by parts in~\eqref{eq:spatial-bound} is valid, and its singular contribution is nonpositive. Moreover, $D_J\in L^1(g_x\pi_x)$, locally uniformly in time.
\end{lemma}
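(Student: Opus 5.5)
The argument proceeds by adapting the integration-by-parts argument of Li and Lu~\cite{LiLuHypercontractivity} (their weak acceleration lemma) to the term in~\eqref{eq:spatial-bound}. The relevant term is $\int\xi_{g_x}\cdot[-\nabla_xg_x+\nabla_x^*\Theta]\dd\pi_x$. Write $\xi_{g_x}=\nabla\phi$ with $\phi(x)=|x|^2/2-u(x)$, where $u$ is the convex Brenier potential, so that $T_{g_x}=\nabla u$. Then $D^2\phi=\Id-D^2u$ in the distributional sense. The distributional Hessian $D^2u$ is a matrix-valued measure, positive semidefinite, with Lebesgue decomposition $D^2u=J\,\dd x+(D^2u)_s$, where $J$ is the Alexandrov derivative and $(D^2u)_s\succeq0$ is singular.

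\emph{Integration by parts.} The integrals are first localized with cutoffs $\chi_R$. By the hypotheses, $\nabla g_x$ and $\nabla_x^*\Theta$ lie in $L^2(g_x^{-1}\pi_x)$, and $\xi_{g_x}\in L^2(g_x\pi_x)$ because $\int|\xi|^2g_x\dd\pi_x=W_2^2<\infty$. Cauchy--Schwarz therefore makes the localized integrals converge as $R\to\infty$, and the cutoff-gradient terms vanish in the limit. For the smooth matrix field $S:=g_x(\Sigma)\,e^{-U}/Z=(\Theta+g_x\Id)e^{-U}/Z$, which is positive semidefinite, one has
\[
\int\nabla\phi\cdot\nabla_x^*(\Theta+g_x\Id)\dd\pi_x=\langle D^2\phi,S\rangle=\int g_x\Sigma:(\Id-J)\dd\pi_x-\int S:\dd(D^2u)_s .
\]
The first equality is the divergence identity against the BV gradient $\nabla\phi$. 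The pairing of the smooth field $S$ with the measure $D^2\phi$ is legitimate, since $\nabla\phi$ is locally BV and $S$ is smooth. The second term is nonpositive because $S\succeq0$ and $(D^2u)_s\succeq0$, which gives the sign of the singular contribution. One subtlety is that $g_x\Sigma$ must be handled on $\{g_x=0\}$, where $\Pi_v(v\otimes v\,g)$ vanishes; I would write the field as $\Pi_v(v\otimes v\,g)$ directly, so the decomposition is not needed there. Combining this with the transport and $-\gamma j$ terms and the $B$ term yields~\eqref{eq:spatial-bound}.

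\emph{Integrability of $D_J$.} By~\eqref{eq:logdet-term}, $D_J=2\log\det((\Id+J)/2)-\log\det J$. Lemma~\ref{lem:jacobian-logs} gives $\log\det J\in L^1(g_x\pi_x)$, so it remains to bound $\log\det((\Id+J)/2)$. Since $D_J\ge0$, one has the lower bound $2\log\det((\Id+J)/2)\ge\log\det J$. For the upper bound, use $\log\det((\Id+J)/2)\le\tr(J-\Id)/2\le\tr J/2$. It then suffices to show that $\int g_x\tr J\dd\pi_x$ is finite, locally uniformly in time. By the identity above with $S=g_xe^{-U}\Id/Z$, this integral is at most $d+\int\nabla\phi\cdot(-\nabla_x g_x)\dd\pi_x$ plus a nonpositive singular part. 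Cauchy--Schwarz then gives the bound $\int g_x\tr J\dd\pi_x\le d+W_2\,\|\nabla g_x\|_{L^2(g_x^{-1}\pi_x)}$. Local uniformity in time follows from the assumed local uniformity of these norms together with the bound $W_2^2\le 2\Ent_x/m$ (Talagrand), and $\Ent_x\le\log M$.

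\emph{Main obstacle.} The delicate step is the rigorous pairing of the smooth but non-compactly-supported field $S$ with the measure $D^2u$ on all of $\mathbb R^d$. The cutoff error terms have the form $\int\nabla\chi_R\cdot S\nabla\phi$, and controlling them requires the $L^2$ hypotheses and the growth of $\phi$. In practice I would cite~\cite{LiLuHypercontractivity}*{Lemma 2.7 and its proof} for this step, where the same localization is carried out for regular density-current pairs.
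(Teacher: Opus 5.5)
Your integration by parts, and the sign of its singular term, follow the paper's localized BV argument. Three steps you leave implicit or delegate are each short and should be supplied directly.

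First, the cutoff terms vanish by Cauchy--Schwarz. The inputs are $\int g_x\|\Sigma\|^2\dd\pi_x\le\int|v|^4g\dd\pi<\infty$, which holds because $g\le M$, and $\int g_x|\xi_{g_x}|^2\dd\pi_x=W_2^2$. No growth property of $\phi$ enters.

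Second, your global identity $\langle D^2\phi,S\rangle=\int g_x\Sigma:(\Id-J)\dd\pi_x-\int S:\dd(D^2u)_s$ presupposes $\int g_x\Sigma:J\dd\pi_x<\infty$. The paper gets this by a monotone-limit argument. The quantity $\int\chi_Rg_x\Sigma:J\dd\pi_x$ plus the localized singular term is nonnegative and nondecreasing in $R$. It also equals $\int\chi_Rg_x\tr\Sigma\dd\pi_x$ minus a convergent pairing plus a vanishing boundary term, so both pieces are finite.

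Third, you use without comment that full support of $g_x$ makes the Brenier potential finite on all of $\mathbb R^d$. This is what gives $T_{g_x}\in BV_{\rm loc}(\mathbb R^d)$.

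For $D_J\in L^1(g_x\pi_x)$ you take a genuinely different route. The paper proves $-d\log2\le\log\det((\Id+J)/2)\le2k+\frac12\Sigma:J$ by the same Gaussian duality as in Proposition~\ref{prop:current}. Integrability then follows from $\Ent_v(g)\le\log M$ and the finiteness of $\int g_x\Sigma:J\dd\pi_x$ just obtained, with no further integration by parts. You instead use $\log\det A\le\tr(A-\Id)$ and reduce to $\int g_x\tr J\dd\pi_x<\infty$, via a second integration by parts with $\Sigma$ replaced by $\Id$. This route is valid, purely spatial, and quantitative, but your bound is miscomputed. Since $\nabla_x^*(g_x\Id)=-\nabla_xg_x+g_x\nabla U$, the identity gives
\[
 \int g_x\tr J\dd\pi_x\le d+\int\xi_{g_x}\cdot\nabla_xg_x\dd\pi_x-\int g_x\,\xi_{g_x}\cdot\nabla U\dd\pi_x .
\]
The force term cannot be dropped. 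For $U=|x|^2/2$ and $g_x\pi_x=N(0,\tfrac14\Id)$ one has $J=2\Id$, so the left side equals $2d$, while your bound $d+W_2\|\nabla_xg_x\|_{L^2(g_x^{-1}\pi_x)}$ equals $\frac74d$.

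The repair is one line. Since $\int g_x|\nabla U|^2\dd\pi_x\le M\int\Delta U\dd\pi_x\le MLd$, the extra term is at most $W_2\sqrt{MLd}$. Together with $W_2^2\le2\log M/m$, the bound stays locally uniform in time.
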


\begin{proof}
We follow the localized BV argument in~\cite{LuUnderdampedEntropy}*{Lemma 5.3}, retaining the exact logarithmic determinant. The regular density-current assumption gives a positive spatial density with full support. The Brenier potential is therefore finite throughout \(\mathbb R^d\), and \(T_{g_x}\in BV_{\rm loc}(\mathbb R^d)\). Its distributional derivative is \(J\dd x+N\dd\sigma\), where \(\sigma\) is a nonnegative singular measure and \(N\succeq0\). By the regularity assumptions, \(\Sigma\) has a continuous positive semidefinite representative. Under a nonnegative cutoff \(\chi_R\), the singular part is therefore
\[
 -\int\chi_R g_x\,Z^{-1}e^{-U}\Sigma:N\dd\sigma\le0.
\]
The cutoff boundary term vanishes as \(R\to\infty\) by Cauchy--Schwarz, since both \(\int g_x\|\Sigma\|^2\dd\pi_x\le\int|v|^4g\dd\pi\) and \(\int g_x|\xi_{g_x}|^2\dd\pi_x\) are finite. The weighted $L^2$ assumptions justify the remaining pairings. To justify the global BV decomposition, set $K=Z^{-1}e^{-U}g_x\Sigma$. Since
\begin{equation*}
 \operatorname{div}K
 =Z^{-1}e^{-U}(-\nabla_x^*\Theta+\nabla g_x-g_x\nabla U),
\end{equation*}
these assumptions and the second spatial moment give $\int|\operatorname{div}K|^2/(Z^{-1}e^{-U}g_x)\dd x<\infty$. For increasing cutoffs $\chi_R$, the nonnegative quantity
\begin{equation*}
 S_R=\int\chi_R g_x\Sigma:J\dd\pi_x
       +\int\chi_R Z^{-1}e^{-U}g_x\Sigma:N\dd\sigma
\end{equation*}
therefore has a finite limit: the localized integration by parts expresses it as $\int\chi_R g_x\tr\Sigma\dd\pi_x-\int\chi_R\xi_{g_x}\cdot(-\operatorname{div}K)\dd x$ plus the vanishing boundary term. Thus $\int g_x\Sigma:J\dd\pi_x<\infty$, and no divergent trace terms are subtracted in the global decomposition. The force pairing converges by Cauchy--Schwarz, which justifies the integration by parts in~\eqref{eq:spatial-bound}.

For the logarithmic integrals, Gaussian entropy duality for the centered conditional velocity law, with test function $v\cdot(\Id-J)v/4$, gives the pointwise bounds
\begin{equation*}
 -d\log2
 \le\log\det\!\left(\frac{\Id+J}{2}\right)
 \le2k+\frac12\Sigma:J.
\end{equation*}
The lower bound uses $J\succeq0$. The upper bound and the integrability of $k$ and $\Sigma:J$ prove that this logarithm is integrable. Lemma~\ref{lem:jacobian-logs} then gives $D_J\in L^1(g_x\pi_x)$, locally uniformly in time by the weighted $L^2$ assumptions and $g_t\le M$.
\end{proof}

\begin{lemma}[Extension to bounded initial densities]\label{lem:approximation}
Under the hypotheses of Proposition~\ref{prop:weak-current}, the current inequality~\eqref{eq:closed-current} holds in distributions in time. Both $\Ent_\pi(g_t)$ and $\COT(g_t)$ are continuous on $[0,\infty)$.
\end{lemma}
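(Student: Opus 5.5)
We follow the approximation scheme of~\cite{LuUnderdampedEntropy}*{Section 7}. First we replace \(U\) by a smooth potential \(U^{(n)}\) with the same convexity and smoothness bounds, \(m\Id\preceq\nabla^2U^{(n)}\preceq L\Id\). Next we replace the initial density by a regularized \(g_0^{(n)}\): mollify in \(x\) and \(v\), mix with a small multiple of the equilibrium density \(1\), and multiply by a smooth compactly supported weight followed by renormalization. This keeps \(0\le g_0^{(n)}\le M\), or at worst \(\le M(1+o(1))\), and makes the density strictly positive with good moments. The bounce kernel \((v\cdot\nabla U)_+\) is only Lipschitz, so we also regularize the BPS dynamics itself, for instance by adding a small velocity-space diffusion \(\varepsilon\Delta_v\)-type term that preserves \(\varphi_d\), or a smoothed bounce rate.

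For the regularized flow we check the hypotheses of Proposition~\ref{prop:current}.
\begin{itemize}
\item The density bound propagates, since \(\mathcal P_t\) is Markov and preserves \(\pi\), so \(g_t\le\|g_0\|_\infty\).
\item Positivity and full support follow from refreshment and the regularization.
\item The weighted \(L^2\) bounds on \(\nabla g_x\) and \(\nabla_x^*\Theta\) come from propagated Sobolev and Fisher-type estimates.
\item We verify the regular density-current conditions of~\cite{LiLuHypercontractivity}*{Definition 2.5}.
\end{itemize}
Combining Proposition~\ref{prop:current} with Lemmas~\ref{lem:central} and~\ref{lem:tail} then gives~\eqref{eq:closed-current} for the approximants, up to error terms generated by the regularization. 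These errors vanish as the regularization is removed. For the velocity diffusion they have a favorable sign in the entropy and are controllable in the current.

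We then pass to the limit in the integrated-in-time form. Test~\eqref{eq:closed-current} against a nonnegative \(\phi\in C_c^\infty((0,\infty))\), which turns it into
\[
-\int\phi'\COT\,dt\le\int\phi\bigl(-\tfrac12\Ent_x+A_r\Ent_v-\gamma\COT+\eta_r\bigr)\,dt .
\]
The entropy terms are lower semicontinuous under weak convergence. For the term \(\Ent_x\) this already has the right sign. The term \(A_r\Ent_v\) appears with a plus sign, so we need upper control. We would get it by proving convergence of \(\Ent_\pi(g_t^{(n)})\to\Ent_\pi(g_t)\) from \(L^1(\pi)\) stability of the semigroup together with the uniform bound \(g\le M\). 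With the density bounded, \(g\log g\) is uniformly integrable, so strong \(L^1\) convergence implies convergence of \(\Ent_\pi\) and of \(\Ent_x\). Hence \(\Ent_v=\Ent_\pi-\Ent_x\) converges as well.

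For the current, \(\COT(g)=\int j\cdot\xi_{g_x}\dd\pi_x\). The map \(j\) converges in \(L^2\)-weighted sense by the moment bounds. The Brenier displacement \(\xi_{g_x^{(n)}}\) converges in \(L^2(g_x\pi_x)\) by stability of optimal transport for strongly convex targets with uniformly bounded densities. Uniform fourth-moment bounds in \(v\) and second moments in \(x\) then give convergence of \(\COT\).

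Continuity on \([0,\infty)\) follows from three facts. The flow \(t\mapsto g_t\) is continuous in \(L^1(\pi)\), because the semigroup is strongly continuous on \(L^1\). The bound \(g_t\le M\) gives uniform integrability of \(g\log g\). Transport maps are stable, so \(\COT\) is continuous.

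The main obstacle is verifying the regularity hypotheses of Proposition~\ref{prop:current} for the regularized BPS. This is hard because the bounce term is nonsmooth near \(\{v\cdot\nabla U=0\}\) and at the minimizer, where \(n\) is undefined. We would first try to smooth the jump kernel while keeping \(\pi\) invariant, writing the bounce rate as \(\rho_\varepsilon(v\cdot\nabla U)\) with \(\rho_\varepsilon(s)-\rho_\varepsilon(-s)=s\). We would then check that the modified moment equation~\eqref{eq:moment} has a defect term with the same structure, so that Lemma~\ref{lem:tail}, which relies on \(B\ge0\) and its moment bounds, still applies uniformly in \(\varepsilon\).
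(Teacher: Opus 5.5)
There is a genuine gap, and it is the step you flag as the main obstacle. The whole content of this lemma is to produce a path to which Proposition~\ref{prop:current} applies, and neither of your regularizations of the dynamics does that.

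\emph{Smoothed bounce rate.} This breaks the structure the estimates rely on. A smooth \(\rho_\varepsilon\ge0\) with \(\rho_\varepsilon(s)-\rho_\varepsilon(-s)=s\) has \(\rho_\varepsilon'(0)=1/2\), hence \(\rho_\varepsilon(0)>0\). The force \(-g_xB\nabla U\) in the first-moment equation then becomes \(-2g_x n\int\rho_\varepsilon(v\cdot\nabla U)(v\cdot n)\,g_{v\mid x}\dd\varphi_d\). The corresponding coefficient \(B_\varepsilon\) can be negative: take \(g_{v\mid x}\) concentrated near a point with small negative \(v\cdot n\). That invalidates the step \(-B\nabla U\cdot\xi_{g_x}\le-B[U-U(T_{g_x})]\) in the proof of Proposition~\ref{prop:current}. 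It also contains the term \(2\rho_\varepsilon(0)|\nabla U|^{-1}\int(v\cdot n)\,g_{v\mid x}\dd\varphi_d\), which is singular at the minimizer, so Lemma~\ref{lem:gaussian-moment} no longer bounds the defect.

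\emph{Velocity diffusion.} This does preserve the structure: it only replaces \(\gamma\) by \(\gamma+\varepsilon\) in~\eqref{eq:moment} and adds a dissipative term to the entropy. But you give no mechanism for the ``propagated Sobolev and Fisher-type estimates'', and none is readily available. The kink of the rate on \(\{v\cdot\nabla U=0\}\) and the discontinuity of \(R_x\) at the minimizer limit any hypoelliptic bootstrapping. So smoothness of \(\Theta_t\) in position and the weighted bounds of~\cite{LiLuHypercontractivity}*{Definition 2.5} are not within reach this way.

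\emph{Limit passage.} Your limit argument has a second unproved input: strong \(L^1\) convergence of the perturbed trajectories, with a moving reference measure when \(U\) is smoothed. Contraction of a fixed semigroup does not give this.

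The missing idea is to regularize the trajectory rather than the dynamics. The paper first imposes a floor \(g_0\ge\lambda\). This is removed at the end via \(\alpha+(1-\alpha)g_t\), which is again a BPS solution because constants are stationary, with \(\Ent_v\) controlled by convexity. It then convolves \(\mu_t\) in position with \(N(0,s\Id)\) at every time, measuring densities against \((\pi_x*\zeta_s)\otimes\varphi_d\). All regularity hypotheses follow from the Gaussian convolution (Fisher information of order \(d/s\) times moments) and \(g\le M\), with no regularity theory for BPS.

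The price is that the convolved moments satisfy~\eqref{eq:moment} with \(U_s\), an averaged coefficient \(\widetilde B_s=\int B\dd w_y\ge0\), and a commutator \(e_s\) satisfying \(\int|e_s|^2/g_x^{(s)}\dd\pi_x^{(s)}\le6ML^2ds\). Talagrand's inequality turns the commutator into an \(O(\sqrt s)\) error. Since the bound in Lemma~\ref{lem:gaussian-moment} does not depend on \(n\), entropy duality averages over \(w_y\). The velocity-entropy cost is then \(\int g_x^{(s)}\widetilde k_s\dd\pi_x^{(s)}=\Ent_v(g_t)\), that of the original trajectory.

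This also removes your limit difficulty. \(A_r\Ent_v(g_t)\) never has to converge, and \(\Ent_x\) enters with a favorable sign, so lower semicontinuity suffices. Only \(\COT\) must converge, which follows from stability of Brenier maps. Your continuity argument matches the paper's.
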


\begin{proof}
\emph{Regularization.} Assume first that $g_0\ge\lambda>0$, so $\lambda\le g_t\le M$ for all~$t$. We convolve the trajectory in position at every time. Let $\zeta_s=N(0,s\Id)$, let $U_s$ be the potential of $\pi_x^{(s)}=\pi_x*\zeta_s$, normalized to have minimum zero, and let $g_t^{(s)}$ be the density of $\mu_t*_x\zeta_s$ relative to $\pi_x^{(s)}\otimes\varphi_d$. Averaging gives $\lambda\le g_t^{(s)}\le M$. The potential $U_s$ is smooth and
\begin{equation}\label{eq:regularized-curvature}
 m_s\Id\preceq\nabla^2U_s\preceq L\Id,
 \qquad m_s=\frac{m}{1+ms}\,.
\end{equation}
Indeed, let $X\sim\pi_x$ and $Z\sim N(0,\Id)$ be independent, and set $Y=X+\sqrt{s}Z$. Differentiating the convolution gives
\begin{align*}
 \nabla^2U_s(y)&=s^{-1}\Id-s^{-2}\operatorname{Cov}(X\mid Y=y)\\
 &=\E[\nabla^2U(X)\mid Y=y]-\operatorname{Cov}(\nabla U(X)\mid Y=y)\,.
\end{align*}
The conditional potential has Hessian at least $(m+s^{-1})\Id$, so the Brascamp--Lieb inequality~\cite{BrascampLieb1976} bounds its covariance by $(m+s^{-1})^{-1}\Id$. The two identities give the lower and upper bounds in~\eqref{eq:regularized-curvature}. Conditional Jensen and $\nabla U_s(Y)=\E[\nabla U(X)\mid Y]$ also give
\begin{equation}\label{eq:regularized-force-moment}
 \int|\nabla U_s|^2\dd\pi_x^{(s)}
 \le\int|\nabla U|^2\dd\pi_x
 =\int\Delta U\dd\pi_x\le Ld\,.
\end{equation}

We verify the regularity hypotheses of Proposition~\ref{prop:current} for this convolved path. Write $p_s$ and $f_t$ for the Lebesgue densities of~$\pi_x^{(s)}$ and~$\mu_t$, and set $f_{t,s}=f_t*_x\zeta_s$. Define
\begin{equation*}
 \rho_s=\int f_{t,s}\dd v=p_sg_{x,t}^{(s)},\qquad
 \jmath_s=\int vf_{t,s}\dd v=p_sj_t^{(s)},\qquad
 u_s=\frac{\jmath_s}{\rho_s}=\frac{j_t^{(s)}}{g_{x,t}^{(s)}}\,.
\end{equation*}
For the regularity verification, $C_{\lambda,s}$ denotes finite constants depending also on $d,m,L,M,\gamma$, but not on~$t$.

For a stationary BPS process~$\mathsf Z_t$, stochastic continuity gives $\|\mathcal P_t^*h-h\|_{L^1(\pi)}\le\E|h(\mathsf Z_0)-h(\mathsf Z_t)|\to0$ for bounded continuous~$h$. Density and contraction extend this to $L^1(\pi)$. Since $g_t\le M$, truncation gives, for every nonnegative polynomial~$P$,
\begin{equation}\label{eq:regularized-weighted-continuity}
 \int P(x,v)|g_t-g_u|\dd\pi\longrightarrow0
 \qquad(t\to u)\,.
\end{equation}
For $\ell=0,1,2$ and every spatial multi-index~$\beta$, testing the weak BPS equation against $D_x^\beta\zeta_s(x-y)v^{\otimes\ell}$ gives continuous time derivatives of the convolved moments. The tests and their generator images have polynomial growth, locally uniformly in~$x$, so~\eqref{eq:regularized-weighted-continuity} justifies this in every local spatial derivative norm. Hence $g_{x,t}^{(s)},j_t^{(s)},\Theta_t^{(s)}\in C^1_tC^\infty_x$.

The conditional velocity density is bounded by $M/\lambda$, so $|u_s|^2\le M/\lambda$. The spatial Gaussian convolution estimate in~\cite{LiLuHypercontractivity}*{proof of Lemma B.15} gives
\begin{equation}\label{eq:regularized-fisher}
 \mathcal F_{4,s}:=\int(1+|v|^4)
       \frac{|\nabla_x f_{t,s}|^2}{f_{t,s}}\dd x\dd v
 \le\frac ds\int(1+|v|^4)f_t\dd x\dd v
 \le C_{\lambda,s}\,.
\end{equation}
Let $A_s=\rho_s\Sigma_s$, where $\Sigma_s$ is the conditional covariance, and set $H_s=A_s-\rho_s\Id=p_s\Theta_t^{(s)}$. The centered-moment differentiation and Cauchy--Schwarz estimate in~\cite{LiLuHypercontractivity}*{proof of Proposition B.18} use only the bound on $u_s$ and~\eqref{eq:regularized-fisher}; they require no time or velocity convolution. With $\nabla_s^*=-\operatorname{div}+\nabla U_s\cdot$, these estimates and~\eqref{eq:regularized-force-moment} give
\begin{align}
 \int\frac{|\nabla g_{x,t}^{(s)}|^2}{g_{x,t}^{(s)}}\dd\pi_x^{(s)}
 &\le C_{\lambda,s},\label{eq:regularized-density-derivative}\\
 \int\frac{|\nabla_s^*\Theta_t^{(s)}|^2}{g_{x,t}^{(s)}}\dd\pi_x^{(s)}
 &=\int\frac{|\operatorname{div}H_s|^2}{\rho_s}\dd x
 \le C_{\lambda,s},\label{eq:regularized-stress-derivative}\\
 \int\frac{|\operatorname{div}A_s|^2}{\rho_s}\dd x
 &\le C_{\lambda,s},\label{eq:regularized-covariance-derivative}
\end{align}

For the BPS acceleration, define
\begin{equation*}
 F_s^{\rm b}
 =\left[\int2(v\cdot n)_+^2\nabla U\,f_t(\cdot,v)\dd v\right]*\zeta_s\,.
\end{equation*}
The absolute moment equations give
\begin{equation*}
 \partial_t\rho_s+\operatorname{div}\jmath_s=0,\qquad
 \rho_sa_s=-\operatorname{div}A_s-\gamma\jmath_s-F_s^{\rm b},
 \qquad a_s=\partial_tu_s+(u_s\cdot\nabla)u_s\,.
\end{equation*}
Conditional Cauchy--Schwarz in the joint convolution and $\int[2(v\cdot n)_+^2]^2\dd\varphi_d=6$ imply $\int|F_s^{\rm b}|^2/\rho_s\le6MLd$. Therefore
\begin{equation}\label{eq:regularized-acceleration}
 \int|a_s|^2g_{x,t}^{(s)}\dd\pi_x^{(s)}
 \le C_{\lambda,s}\,.
\end{equation}
These bounds, normalization, the density bounds, and the continuity equation verify~\cite{LiLuHypercontractivity}*{Definition 2.5} and the additional assumptions of Proposition~\ref{prop:current}. No uniformity in $\lambda,s\downarrow0$ is needed here.

\emph{Moment equations of the regularized density.} Position convolution commutes with transport and refreshment but changes the defect term. At a fixed time, the conditional law of $x$ given $x+\sqrt{s}Z=y$, for $(x,v)\sim\mu_t$ and independent $Z\sim N(0,\Id)$, is
\[
 w_y(\dd x)=\frac{g_x(x)\zeta_s(y-x)\pi_x(\dd x)}{\int g_x(x')\zeta_s(y-x')\pi_x(\dd x')}.
\]
The conditional velocity density is $g_{v\mid x}^{(s)}(y,v)=\int g_{v\mid x}(x,v)\,w_y(\dd x)$. In the convolved moment equations, the defect term becomes $\widetilde{\mathfrak d}_s\,g_x^{(s)}\nabla U_s-e_s$, where
\begin{align*}
 \widetilde B_s(y)&=\int B\dd w_y,\qquad
 \widetilde{\mathfrak d}_s(y)=1-\widetilde B_s(y),\\
 e_s(y)&=g_x^{(s)}(y)\int B(x)\,[\nabla U(x)-\nabla U_s(y)]\,w_y(\dd x)\,.
\end{align*}

\emph{Current inequality for the regularized density.} The proof of Proposition~\ref{prop:current} gives~\eqref{eq:current-full} for the convolved moments with $U_s$ and $\widetilde B_s\ge0$, and the additional term $-\int\xi_{g_x^{(s)}}\cdot e_s\dd\pi_x^{(s)}$. To control the averaged defect term, put $\widetilde k_s(y)=\int k\dd w_y$. The Gaussian bound~\eqref{eq:bounce-mgf} is independent of~$n(x)$, so averaging entropy duality at each fixed scalar $z<1/4$ gives
\begin{equation}\label{eq:averaged-duality}
 -z\widetilde{\mathfrak d}_s(y)\le\widetilde k_s(y)+\psi(z)\,.
\end{equation}
Also, conditional Jensen's inequality gives
\begin{equation*}
 \int g_x^{(s)}\widetilde B_s\dd\pi_x^{(s)}\le M,\qquad
 \int g_x^{(s)}\widetilde B_s^2\dd\pi_x^{(s)}\le6M\,.
\end{equation*}
These are the velocity inputs in the proofs of Lemmas~\ref{lem:central} and~\ref{lem:tail}, which therefore apply with $U_s$, $\widetilde B_s$, $\widetilde{\mathfrak d}_s$, and $\widetilde k_s$, with the same remainder~$\eta_r$. Moreover, $\int g_x^{(s)}\widetilde k_s\dd\pi_x^{(s)}=\Ent_v(g)$, while convexity gives $\Ent_v(g^{(s)})\le\Ent_v(g)$. Thus the entropy cost is controlled by that of the original trajectory.

For the commutator, let $X\sim\pi_x$, $V\sim\varphi_d$, and $Z\sim N(0,\Id)$ be independent, and put $Y=X+\sqrt s\,Z$. Set $H=2(V\cdot n(X))_+^2$ and $D=\nabla U(X)-\nabla U_s(Y)$. Then
\begin{equation*}
 e_s(Y)=\E[g(X,V)HD\mid Y],\qquad
 g_x^{(s)}(Y)=\E[g(X,V)\mid Y]\,.
\end{equation*}
Conditional Cauchy--Schwarz, $g\le M$, and $\E_VH^2=6$ give
\begin{equation}\label{eq:commutator-energy}
 \int\frac{|e_s|^2}{g_x^{(s)}}\dd\pi_x^{(s)}
 \le\E[g(X,V)H^2|D|^2]
 \le6M\E|D|^2\le6ML^2ds\,.
\end{equation}
The last bound uses $\nabla U_s(Y)=\E[\nabla U(X)\mid Y]$ and Lipschitz continuity of~$\nabla U$. Cauchy--Schwarz and Talagrand's inequality~\cite{OttoVillani2000}, with constant~$m_s$ and $\Ent_x(g_x^{(s)})\le\log M$, now give
\begin{equation*}
 \left|\int\xi_{g_x^{(s)}}\cdot e_s\dd\pi_x^{(s)}\right|
 \le\sqrt{\frac{2\log M}{m_s}}\sqrt{6ML^2ds}\le C\sqrt{s}\,,
\end{equation*}
where $C$ depends only on $d,m,L,M$, uniformly in $t$, $\lambda>0$, and $0<s\le1$.

Combining the defect-term and commutator bounds as in the regular case gives, in distributions in time,
\begin{align*}
 \frac{\dd}{\dd t}\COT(g_t^{(s)})
 &\le-\frac12\Ent_x(g_{x,t}^{(s)})+A_r\Ent_v(g_t)-\gamma\COT(g_t^{(s)})\\
 &\quad+\eta_r+C\sqrt{s}\,.
\end{align*}

\emph{Removing the regularization.} Let $s\downarrow0$. The spatial entropy enters with a negative sign, so its lower semicontinuity under narrow convergence suffices, by Fatou's lemma against nonnegative time tests. Transport stability~\cite{LiLuHypercontractivity}*{Appendix A, Step 2} and uniqueness of the Brenier map identify every limit of the joint laws of $(X,V,T_{g_x^{(s)}}(X))$ under~$\mu_t*_x\zeta_s$ as the law of $(X,V,T_{g_x}(X))$ under~$\mu_t$. The density bound and pushforward identity give uniform fourth moments, hence uniform integrability of the current integrands. Thus $\COT(g_t^{(s)})\to\COT(g_t)$, with a bound uniform in $t$ and $s\le1$ by~\eqref{eq:current-size}. Passing to the limit in the weak formulation gives~\eqref{eq:closed-current}.

To remove the floor \(\lambda\), apply this result to the initial density \(\alpha+(1-\alpha)g_0\) with \(0<\alpha<1\). Since constant densities are stationary, the corresponding solution is \(\alpha+(1-\alpha)g_t\), with values in \([\alpha,M]\). Let \(\alpha\downarrow0\). The same limits apply, and convexity gives \(\Ent_v(\alpha+(1-\alpha)g_t)\le\Ent_v(g_t)\). Similar approximation arguments appear in~\cite{LiLuHypercontractivity}*{Appendix B} and~\cite{LuUnderdampedEntropy}*{Section 7}.

\emph{Continuity.} The map \(t\mapsto g_t\) is continuous in \(L^1(\pi)\), as verified in the regularization argument, and \(0\le g_t\le M\). Hence \(\Ent_\pi(g_t)\) is continuous by dominated convergence. The same stability argument for Brenier maps shows that \(\COT(g_t)\) is continuous.
\end{proof}

\bibliography{references}
\end{document}